%
%
%
%
%
%
\documentclass[reqno]{amsart}
\usepackage{amsmath}
\usepackage{amsfonts}
\usepackage{amssymb}
\usepackage{amstext}
\usepackage{amsbsy}
\usepackage{amsopn}
\usepackage{amsthm}
\usepackage{amsxtra}
\usepackage{upref}

\newcommand{\x}{\ensuremath{\underline{x}}}
\newtheorem{thm}{Theorem}[section]

\newtheorem{lem}[thm]{Lemma}
\newtheorem{prop}[thm]{Proposition}
\theoremstyle{definition}

\theoremstyle{remark}

\numberwithin{equation}{section}

\newtheorem*{example*}{Example}

\def\del{\partial}

\def\var{\textrm{var}}

\def\wt{\widetilde}

\def\text#1{\textrm{#1}}

\def\emptyset{\varnothing}

\def\e{\varepsilon}

\def\vf{\varphi}
\def\a{\alpha}
\def\b{\beta}

\def\d{\delta}

\def\D{\mathbb D}
\def\l{\lambda}
\def\L{\Lambda}
\def\W{\mathcal W}
\def\s{\sigma}

\def\x{\times}
\def \R{\mathbb R}
\def \N{{\mathbb N}}
\def\E{\mathbb E}
\def \Z{\mathbb Z}

\def\ov{\overline}
\def\un{\underline}

\def\om{\omega}

\def\C{\mathbb C}

\def\D{\mathbb D}

\def\({\biggl(}
\def\){\biggr)}

\def\<{\bold\langle}
\def\>{\bold\rangle}

\DeclareMathOperator{\dist}{dist}

\DeclareMathOperator{\const}{const}

\DeclareMathOperator{\Hor}{Hor}
\DeclareMathOperator{\loc}{loc}
\DeclareMathOperator{\diam}{diam}

\DeclareMathOperator{\supp}{supp}
\DeclareMathOperator{\I}{I}
\DeclareMathOperator{\II}{II}
\DeclareMathOperator{\modi}{new}

\title[Generic Points for Horocycle Flows]{The Generic Points for the Horocycle Flow on a Class of Hyperbolic Surfaces with Infinite Genus}
\author{Omri Sarig}
\author{Barbara Schapira}
\thanks{O.S. was partially supported by an NSF grant DMS-0652966 and by an Alfred P. Sloan Research Fellowship. B.S. was  partially supported by the A.N.R. project T.E.M.I.
(2005-2008).
}
\address{O. Sarig, Mathematics Department, The Pennsylvania State University, University Park, PA 16802 USA}
\email{sarig@math.psu.edu}
\address{B. Schapira,
L.A.M.F.A. UMR 6140,
Facult\'e de Math\'ematiques et Informatique,
Universit\'e Picardie Jules Verne,
33 rue St Leu
80000 Amiens, France}
\email{Barbara.Schapira@u-picardie.fr}
\keywords{horocycle flow, infinite genus, equidistribution, ratio ergodic theorem, Babillot-Ledrappier measures}
\subjclass{37A40,37A17, 37D40}
\date{March 9, 2008}
\begin{document}
\begin{abstract}
A point is called generic for a flow preserving  an infinite ergodic invariant Radon measure, if its orbit satisfies the conclusion of the ratio ergodic theorem  for every pair of continuous functions with compact support and non-zero integrals.
The generic points for horocycle flows on hyperbolic surfaces of finite genus are understood, but there are no results in infinite genus. We give such a result, by characterizing the generic points for $\Z^d$--covers.
\end{abstract}
\maketitle

\section{Introduction}

\subsection*{Generic Points}
Suppose $\phi^t:X\to X$ is a continuous flow on a second countable locally compact metric space $X$. Let $C_c(X)$ denote the set of all continuous functions with compact support.
A point $x\in X$ is called {\em generic} for an invariant  Radon measure $m$, if
\begin{enumerate}
\item $m(X)<\infty$, and for all $f\in C_c(X)$,
$
\frac{1}{T}{\int_0^T f(\phi^t x)dt}\xrightarrow[T\to\infty]{}\frac{1}{m(X)}\int f dm
$,
\item $m(X)=\infty$, and for all $f,g\in C_c(X)$ with non-zero integrals,
$$
\frac{\int_0^T f(\phi^t x)dt}{\int_0^T g(\phi^t x)dt}\xrightarrow[T\to\infty]{}\frac{\int f dm}{\int g dm}.
$$
\end{enumerate}
If $m$ is  ergodic and conservative, then $m$--almost every $x$ is generic, because of the ratio ergodic theorem.
The question is to identify this set of full measure.
\subsection*{Horocycle Flows}
Suppose $M$ is a connected hyperbolic surface, and let $T^1 M$ denote its unit tangent bundle (made of all tangent vectors of length one). The (stable) {\em horocycle} of $\om\in T^1 M$ is the set $\textrm{Hor}(\om)$ of all unit tangent vectors $\om'$ such that $d(g^s\om',g^s\om)\xrightarrow[s\to\infty]{}0$, where $g^s:T^1 M\to T^1 M$ is the geodesic flow, and $d$ is the hyperbolic metric of $T^1 M$.
This is a smooth curve.
The (stable) {\em horocycle flow} is the flow $h^t:T^1 M\to T^1 M$  which moves a vector $\om\in T^1 M$ along $\textrm{Hor}(\om)$ at unit speed,  in the positive direction (to determine the orientation, lift to the universal cover $T^1\D$, $\D:=\{z\in\C:|z|<1\}$).

If $M$ has finite volume, then the generic points for the horocycle flow are understood thanks to the works of Furstenberg \cite{F} (compact surfaces), and Dani \& Smillie \cite{DS} (surfaces of finite area). Burger \cite{Bu}  characterized the symmetrically generic points for a large class of hyperbolic surfaces of finite genus and infinite volume, where `symmetrically generic' means that  $\int_0^T$ is replaced by  $\int_{-T}^{T}$. Schapira \cite{Scha} characterized the symmetrically generic points for all hyperbolic surfaces of finite genus.

These results can be summarized as saying that all non-periodic, non-wandering horocyles are generic for  the volume measure in case $vol(T^1M)<\infty$, or symmetrically generic for a certain singular infinite Radon measure which is carried by the non-wandering set of the horocycle flow in case $vol(T^1 M)=\infty$  (see \cite{Bu}, \cite{Scha} for details).

Ratner \cite{Ra} generalized the part of these results pertaining to {\em finite} invariant measures to all unipotent flows, whence to all horocycle flows on all hyperbolic surfaces. One of her results is that the horocycle flow on a hyperbolic surface of infinite volume (e.g. a surface with infinite genus) does not admit finite invariant measures except possibly trivial measures on periodic horocycles encircling cusps.
But there could be   globally supported {\em infinite} invariant Radon measures (Babillot \& Ledrappier \cite{BL}). The problem of describing the generic points for these measures is completely open.

The purpose of this paper is to solve this problem for  the simplest possible class of hyperbolic surfaces of infinite genus: $\Z^d$--covers of compact hyperbolic surfaces.

\subsection*{$\Z^d$--Covers} All surfaces in this paper are assumed to be connected.

A  hyperbolic surface $M$ is called a {\em regular $\Z^d$--cover of a compact hyperbolic surface $M_0$} (or just `$\Z^d$--cover'), if there is an onto map $p:M\to M_0$ such that (1) every $x\in M_0$ has a neighborhood $V_x$ such that every connected component of $p^{-1}(V_x)$ is mapped isometrically by $p$ onto $V_x$, (2) the group
$$
\textrm{Deck}(M,p):=\{D:M\to M: D\textrm{ is an isometry s.t. }p\circ D=D\}
$$
is isomorphic to $\Z^d$ (elements of $\textrm{Deck}(M,p)$ are called  {\em deck transformations}), and (3) for every $x\in M$, there is $\wt{x}\in M$ such that $p^{-1}(x)=\{D(\wt{x}):D\in\textrm{Deck}(M,p)\}$.

The deck transformations act on $T^1 M$ by their differentials. We abuse notation and use the same notation for $D$ and its differential.

Choose some connected fundamental domain $\wt{M}_0$ for the action of the group of deck transformations on $T^1 M$. Then every $\om\in T^1 M$ can be associated with a unique $\xi(\om)\in\Z^d$ such that
$
\om\in D_{\xi(\om)}[\wt{M}_0]$.
We call $\xi(\om)$ the {\em $\Z^d$--coordinate of $\om$}. It is useful to think of $T^1 M$ as of a $\Z^d$--array of copies of $\wt{M}_0$, tagged by their $\Z^d$--coordinates.

The {\em asymptotic cycle of a vector} $\om\in T^1 M$ is the following limit, if it exists:
$$
\Xi(\om):=\lim\limits_{T\to\infty}\frac{1}{T}\xi_T(\om), \textrm{ where } \xi_T(\om):=\xi(g^T\om)
$$
 (compare with \cite{Schw}). Let
$$
{\mathfrak C}:=\ov{\textrm{conv}}\left(\{\Xi(\om):\om\in T^1 M\textrm{ s.t. $\Xi(\om)$ exists}\}\right)\subset\R^d,
$$
where $\ov{\textrm{conv}}$ denote the closure of the convex hull.

\subsection*{Babillot--Ledrappier Measures}
The ergodic invariant Radon measures for horocycle flows on $\Z^d$--covers are known. To list them, fix a parametrization $\textrm{Deck}(M,p)=\{D_\xi:\xi\in\Z^d\}$ such that $D_\xi\circ D_\eta=D_{\xi+\eta}$.  Then
\begin{enumerate}
\item  For every homomorphism $\vf:\Z^d\to\R$, there exists
a unique (infinite) horocycle ergodic invariant Radon measure $m_\vf$ such that
$
m_\vf\circ D_{\xi}=e^{\vf(\xi)}m_\vf$ for all $\xi\in\Z^d$, and $m_\vf(\wt{M}_0)=1$ (\cite{BL}, Theorem 1.2). We call these measures the {\em Babillot--Ledrappier measures}.
\item There is a vector $\Xi_\vf\in \R^d$ such that $\Xi(\om)=\Xi_\vf$ $m_\vf$--almost everywhere;  the vector $\Xi_\vf$ determines $\vf$; and the set of all possible $\Xi_\vf$ is equal to the interior of $\mathfrak C$, $int(\mathfrak C)$ (\cite{BL}, Corollary 6.1 and \cite{BL1}, Proposition 1.1).
\item  Every ergodic invariant Radon measure for the horocycle flow on a $\Z^d$--cover of a compact hyperbolic surface is a constant times a Babillot--Ledrappier measure \cite{Sa}.
\end{enumerate}
The (normalized) volume measure  corresponds to $\vf\equiv 0$, and its almost sure asymptotic cycle is $\Xi_0=0$.

\subsection*{Main Result} Suppose $M$ is a connected regular $\Z^d$--cover of a compact hyperbolic surface $M_0$. The purpose of the paper is to prove:
\begin{thm}\label{mainthm}
A vector $\om\in T^1 M$ is generic for some horocycle ergodic invariant Radon measure $m$ iff $\Xi(\om)\in int(\mathfrak C)$. In this case $\Xi(\om)=\Xi_\vf$ and $m=cm_\vf$  for some uniquely determined homomorphism $\vf:\Z^d\to\R$ and $c>0$.
\end{thm}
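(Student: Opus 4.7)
I prove the two implications separately, combining the classification of ergodic invariant Radon measures (fact (3) in the excerpt) with the properties of the Babillot--Ledrappier measures (facts (1)--(2)).

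\textbf{Necessity.} Suppose $\omega$ is generic for some horocycle ergodic invariant Radon measure $m$. By fact (3) we have $m=c\,m_\varphi$ for a uniquely determined $c>0$ and homomorphism $\varphi$. To prove $\Xi(\omega)=\Xi_\varphi$ I would apply the defining ratio condition to pairs $(f\circ D_\xi,f)$ with $f\in C_c(T^1 M)$, $\int f\,dm_\varphi\neq 0$. Since $D_\xi$ commutes with the horocycle flow and $m_\varphi\circ D_\xi=e^{\varphi(\xi)}m_\varphi$, this yields
$$
\frac{\int_0^T f(h^t D_\xi\omega)\,dt}{\int_0^T f(h^t\omega)\,dt}\xrightarrow[T\to\infty]{} e^{\varphi(\xi)},\qquad \xi\in\Z^d,
$$
so $\varphi$ can be extracted directly from the horocycle statistics of $\omega$. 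To link this to the geodesic asymptotic cycle, I would use the commutation $g^s h^t g^{-s}=h^{te^{-s}}$: a horocycle segment of length $T$ at $\omega$ is the $g^{-\log T}$-image of a unit-length horocycle segment based at $g^{\log T}\omega$. The $\Z^d$-translate containing this base point is $D_{\xi(g^{\log T}\omega)}\wt{M}_0$, and combining the resulting bounded-horocycle equidistribution with the $e^{\varphi(\xi)}$-growth identified above pins down $\lim_T T^{-1}\xi(g^T\omega)=\Xi_\varphi\in int(\mathfrak C)$.

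\textbf{Sufficiency.} Conversely, assume $\Xi(\omega)\in int(\mathfrak C)$ and let $\varphi$ be the unique homomorphism with $\Xi_\varphi=\Xi(\omega)$ (fact (2)). I would prove that $\omega$ is generic for a positive multiple of $m_\varphi$. The plan is to lift a Markov section for the geodesic flow on the compact base $M_0$ to a $\Z^d$-extension symbolic system on $T^1 M$, and to realise $m_\varphi$ there as a $\varphi$-tilted Gibbs measure, as in the construction of \cite{BL} and \cite{Sa}. Horocycle averages of $\omega$ are then evaluated by contracting long horocycle segments via $g^{-s}$ to bounded length and counting, via the transfer operator of the symbolic system, the visits of the geodesic orbit of $g^s\omega$ to each translate $D_\xi\wt{M}_0$. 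The assumption $\Xi(\omega)=\Xi_\varphi$ forces the Birkhoff sums of the $\Z^d$-cocycle to equidistribute at the rate conjugate to $\varphi$, and a local limit theorem for the cocycle converts this into the required ratio equidistribution under $h^t$.

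\textbf{Main obstacle.} The principal difficulty is the sufficiency direction: one must upgrade the almost-everywhere conclusion of the ratio ergodic theorem to \emph{every} orbit on the explicit topological set $\{\omega:\Xi(\omega)=\Xi_\varphi\}$. This requires quantitative, uniform control of the $\Z^d$-cocycle along each geodesic orbit with the prescribed drift --- effectively a local central limit theorem valid orbit-by-orbit under the constraint $\Xi(\omega)=\Xi_\varphi$. The restriction to $int(\mathfrak C)$ is essential here: at the boundary of $\mathfrak C$ no Babillot--Ledrappier measure is available and the cocycle's limiting distribution degenerates, so the method cannot reach the boundary.
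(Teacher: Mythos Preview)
You have the difficulty assessment reversed. Your sufficiency outline (symbolic coding, transfer operators, a local limit theorem for the $\Z^d$--cocycle, applied pointwise under the hypothesis $\xi_{\ln T}(\omega)/\ln T\to\Xi_\varphi$) is essentially what the paper does in Proposition~\ref{directprop} and Lemma~\ref{basiclemma}, and the paper describes this as a combination of existing techniques from \cite{LS} and \cite{BL1}. The upgrade from ``almost every $\omega$'' to ``every $\omega$ with the prescribed drift'' comes for free from the \emph{uniformity} of the Babillot--Ledrappier asymptotic (Step~3 of Lemma~\ref{basiclemma}), so this direction is not the principal obstacle.

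The genuine gap is in your necessity argument. The limit $e^{\varphi(\xi)}$ you extract from the pairs $(f\circ D_\xi,f)$ is a property of the \emph{measure} $m_\varphi$, not of the orbit: every $m_\varphi$--generic $\omega$ yields the same ratios, so this cannot by itself locate $g^{\ln T}\omega$. Genericity controls ratios $\int_0^T f\big/\int_0^T g$, not the individual integrals, and it is the latter that encode $\xi(g^{\ln T}\omega)$. Your phrase ``bounded-horocycle equidistribution'' is doing all the work and is undefined; in fact the paper remarks explicitly (after Proposition~\ref{directprop}) that the harmonic-analytic machinery behind the sufficiency proof \emph{cannot} rule out generic points whose geodesic drifts toward $\partial\mathfrak C$. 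The paper's solution, and its main contribution, is an approximate-exchangeability argument (Key Lemma~\ref{keyineqlemma}): one builds explicit block-permuting self-maps $\kappa_{n,N}$ of the horocycle with uniformly bounded Radon--Nikodym derivative against arc length; genericity says a large-$m_\varphi$ set $\Lambda$ on which $f_N/r_N^\ast\approx\Xi_\varphi$ occupies most of the arc in the length measure $\lambda_T$, and the maps $\kappa_{n,N}$ transport this to show the same for the shifted sets $\Lambda_n$, uniformly in $n$. Averaging over $n$ then forces $\xi(g^{T^\ast}\omega_0)/T^\ast\to\Xi_\varphi$. This mechanism is entirely absent from your sketch.
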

\noindent
The hyperbolicity of the geodesic flow makes it easy to construct  geodesics without asymptotic cycles, thus there are many non-wandering horocycles which are not equidistributed with respect to any measure.  This should be contrasted with the finite genus case discussed above.

It is interesting that the mere existence of an asymptotic cycle is not enough for genericity, but that the value is important as well.

\subsection*{The Proof}
The proof that every $\om$ with asymptotic cycle in $int(\mathfrak C)$ is generic uses harmonic analysis.  It is a combination of methods from \cite{LS} for approximating the Birkhoff integral of a function by a symbolic dynamical quantity, and techniques from \cite{BL1} (building on  \cite{L}) for finding the asymptotic behavior of this quantity. A subtle difference is that the arguments of \cite{LS} only work for almost every $\om$, whereas we need an argument which uses explicit assumptions on $\om$ (see the proof of lemma \ref{basiclemma} for details).

The main contribution of the paper is the  converse statement: every generic vector has asymptotic cycle in $int(\mathfrak C)$. We do not know how to do this using  harmonic analysis, because the harmonic analytic tools we have fail for  $\om\in T^1M$ s.t. $\xi_T(\om)/T\to\del\mathfrak C$. We use a different method based on a certain built-in approximate exchangeability structure for the Lebesgue measure on horocycles.

\subsection*{Notational Convention} $a=b\pm c$ means $|a-b|<c$ or $\|a-b\|<c$ depending on the context. $a=e^{\pm c}b$ means $e^{-c}\leq \frac{a}{b}\leq e^c$. Our error bounds $c$ are always very generous, and rarely optimal.

\section{Preparations: Symbolic Dynamics}\label{symbsection}
\subsection*{Generalities}
A {\em subshift of finite type} with set of {\em states} $S$ and {\em
  transition matrix}
$A=(t_{ij})_{S\x S}$ $(t_{ij}\in\{0,1\})$ is  the set
$$
\Sigma:=\{x=(x_i)\in S^\Z:\forall i\in\Z,\,t_{x_i x_{i+1}}=1\}
$$
together with the action of the {\em left shift map} $\s:\Sigma\to\Sigma$, $\s({x})_k=x_{k+1}$, and the metric $d({x},{y})=\sum_{k\in\Z}\frac{1}{2^{|k|}}(1-\d_{x_k y_k})$.
There is a {\em one--sided} version $\s:\Sigma^+\to\Sigma^+$
 obtained by replacing $\Z$ by $\N\cup\{0\}$.

A {\em cylinder} in $\Sigma$ is a set of the form $[a_{-m},\ldots,\dot{a}_0,\ldots,a_{n}]:=\{x\in\Sigma:
x_{-m}^n=a_{-m}^n\}$, where the notation $x_{-m}^n$ means $(x_{-m},\ldots,x_n)$, and  the dot designates the zeroth coordinate. Cylinders in $\Sigma^+$ are defined similarly (with $m=0$). A word $\un{a}$ is called {\em admissible} if $[\un{a}]\neq\emptyset$. The cylinders form a basis of clopen sets for the topology of $\Sigma$ (or $\Sigma^+$). The length of a word $\un{a}$ is denoted by $|\un{a}|$.

A subshift of finite type is {\em topologically mixing} iff $\exists m$ s.t. all the entries of $A^m$ are positive.  Any  topologically mixing subshift of finite type has a number $M_{br}\in\N$
and a collection of admissible words of length $M_{br}$ $\mathfrak B:=\{\un{w}_{ab}:a,b\in S\}$ such that
$$
[a,\un{w}_{ab},b]\neq \emptyset.
$$
We fix such a collection, and refer to its elements as {\em `Markovian bridges'}.

Suppose $F$ is a real valued function on $\Sigma$ or $\Sigma^+$.
The  Birkhoff sums of functions $F$ are denoted by $F_n$:
$$
F_n:=F+F\circ\s+\cdots+F\circ\s^{n-1}.
$$
A function $F$ is said {\em to depend only on non-negative coordinates} if $x_0^\infty=y_0^\infty$ implies $F(x)=F(y)$.
The {\em variation} of such a function is
$$
\var(F):=\sum_{n=1}^\infty \sup\{|F(x)-F(y)|:x_0^{n-1}=y_0^{n-1}\}.
$$
If $F$ is H\"older continuous, then this number is finite.

Let $G$ be a group, assumed for simplicity to be abelian.
The {\em skew-product} over $T:X\to X$ with the {\em cocycle} $f:X\to G$ is the map $T_f:X\x G\to X\x G$, $T(x,\xi):=\bigl(T(x),\xi+f(x)\bigr)$.

The {\em suspension semi-flow} over  $T:X\to X$ and {\em height function} $r^\ast:X\to\R^+$ is the semi-flow $\vf^s:X_{r^\ast}\to X_{r^\ast}$, where
$$
X_{r^\ast}:=\{(x,t): x\in X, 0\leq t< r^\ast(x)\}
$$
and $\vf^s(x,t):=\bigl(T^n x, t+s-r^\ast_n(x)\bigr)$ where $n$ is chosen s.t. $0\leq t+s-r^\ast_n(x)<r^\ast(T^n x)$.
If $T$ is invertible, then this semi-flow has a unique extension to a flow.
 The suspension (semi)-flow can be identified with the (semi)-flow
 $(x,t)\mapsto (x,t+s)$ on $(X\x\R)/\sim$
 (respectively $X\x\R^+/\sim$) where $\sim$ is the orbit relation
of the skew-product $T_{-r^\ast}$. Both descriptions shall be used below.

\subsection*{Symbolic Dynamics for the Geodesic Flow} Let $p:M\to M_0$ be a $\Z^d$--cover of a compact connected orientable hyperbolic surface $M_0$.
We describe the geodesic flow on $g^s:T^1M\to T^1M$ as a suspension flow, whose base is a skew--product,
 whose base is a subshift of finite type.
This description is well--known \cite{PS},\cite{Po},\cite{BL1}. It can be obtained
by a lifting argument from the Bowen--Series  symbolic
 dynamics of $g^s:T^1 M_0\to T^1 M_0$ given by \cite{BS}, \cite{Se1},\cite{Se2}, as in.

\begin{lem}\label{bowen}
Fix $i:T^1 M_0\to T^1 M$ 1-1  with image $\wt{M}_0$ and
 s.t. $dp\circ i=id$.
There exist a topologically mixing two--sided subshift of finite type $(\Sigma, T)$, a H\"older continuous function $r:\Sigma\to\R$
which depends only on the non-negative coordinates, a function $f:\Sigma\to\Z^d$ s.t. $f(x)=f(x_0,x_1)$, a H\"older function $h:\Sigma\to\R$,  and a
H\"older continuous map $\pi:\Sigma\x\Z^d\x\R\to T^1 M$ with the following properties:
\begin{enumerate}
\item $r^\ast:=r+h-h\circ\s$ is non-negative, and there exists a constant $n_0$ such that $\inf r^\ast_{n_0}>0$, where $r^\ast_{n_0}:=\sum_{k=0}^{n_0-1}r^\ast\circ\s^k$.
\item $\pi:(\Sigma\x\{0\})_{r^\ast}\to \wt{M}_0$ is a surjective two-to-one map, where
$$
(\Sigma\x\{0\})_{r^\ast}=\{(x,0,t):0\leq t<r^\ast(x)\}.
$$
$\pi$ is one-to-one everywhere except on a set of  which maps into a countable union of geodesics in $T^1 M$.
\item If $Q_{\xi_0,t_0}(x,\xi,t)=(x,\xi+\xi_0,t+t_0)$, then $\pi\circ Q_{(\xi_0,t_0)}=[{g}^{t_0}\circ D_{\xi_0}]\circ\pi$ for all $(\xi_0,t_0)\in\Z^d\x\R$;
\item $\pi\circ T_{(f,-r^\ast)}=\pi$ where $T_{(f,-r^\ast)}(x,\xi,t)=\bigl(\s x,\xi+f(x),t-r^\ast(x)\bigr)$;
\item Suppose $\om=\pi(x,\xi,t), \om'=\pi(x',\xi',t')$. Then
$$
\hspace{0.8cm}
\exists p,q\geq 0\textrm{ s.t }
\begin{cases}
x_p^\infty=(x')_q^\infty &\\
t-t'=h(x)-h(x')+r_p(x)-r_q(x') &\\
\xi-\xi'=f_q(x')-f_p(x)
\end{cases}
\Rightarrow \exists \tau\textrm{ s.t. }\om'=h^\tau(\om).
$$
\item For every $\om$, there are at most countably many points on
  $\{h^t\om\}_{t\in\R}$
with more than one $\pi$--preimage.
\item Nonarithmeticity \cite{Sh}: $
\ov{\<(-r_n(x),f_n(x)):T^n x=x,\ n\in\N\>}=\R\x\Z^d
$. (See also \cite{C}.)
\end{enumerate}
\end{lem}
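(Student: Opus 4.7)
The plan is to derive this coding as a $\Z^d$--equivariant lift of the Bowen--Series symbolic dynamics for the compact base $M_0$. By \cite{BS,Se1,Se2}, the geodesic flow on $T^1 M_0$ is semi-conjugate to a suspension flow over a topologically mixing two--sided subshift of finite type $(\Sigma,T)$, with positive H\"older roof $R:\Sigma\to\R^+$, via a H\"older map $\bar\pi:\Sigma_R\to T^1 M_0$ which is essentially finite-to-one off a countable union of geodesics. The ``two--to--one'' phrasing in (2) is then a combinatorial feature of Bowen--Series (boundary identifications of the coding rectangles).

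First I would build the cocycle $f:\Sigma\to\Z^d$. Given the reference lift $i:T^1 M_0\to\wt{M}_0$, for $x\in\Sigma$ the geodesic issued from $i(\bar\pi(x,0))$ travels for time $R(x)$ in $T^1 M$ and terminates in a unique deck translate $D_{f(x)}[\wt{M}_0]$; this defines $f(x)$. The equality $f(x)=f(x_0,x_1)$ is the standard fact that, for a Markov coding of a group extension, the deck translate entered at the next return depends only on the current and next symbol, since the pair $(x_0,x_1)$ determines through which face of the fundamental polygon the lifted geodesic segment exits.

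Second I would make the roof one-sided. Sinai's lemma applied to $R$ produces H\"older functions $r,h:\Sigma\to\R$ with $r$ depending only on non-negative coordinates and $R=r+h-h\circ\s$; setting $r^\ast:=R$ verifies (1) with $n_0=1$, since $\inf R>0$ by compactness of $M_0$. Define $\pi:\Sigma\x\Z^d\x\R\to T^1 M$ by $\pi(x,\xi,t):=g^t\,D_\xi\,i(\bar\pi(x,0))$. Then property (3) is immediate, property (4) follows from the defining identity $g^{r^\ast(x)}i(\bar\pi(x,0))=D_{f(x)}i(\bar\pi(\s x,0))$, and properties (2), (5), (6) reduce to the corresponding properties of $\bar\pi$ because $\pi$--preimages on $\wt{M}_0$ project down to $\bar\pi$--preimages on $T^1 M_0$.

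The main obstacle is nonarithmeticity (7): the closure of the set of period vectors $\{(-r_n(x),f_n(x)):T^n x=x,n\in\N\}$ must be all of $\R\x\Z^d$. The $\R$--projection is dense by nonarithmeticity of the length spectrum on $M_0$ \cite{Sh}, and the $\Z^d$--projection surjects because the cover $M$ is connected (hence the $f$--cocycle cannot sit inside a proper sub-lattice without contradicting the regularity of the cover). The delicate step is ruling out joint linear dependencies: a nontrivial continuous character $\chi:\R\x\Z^d\to S^1$ constant on all period vectors would, by a Livsic--type argument, produce a measurable coboundary for a nontrivial combination of $-r$ and $f$, contradicting the independence of the length spectrum from the $\Z^d$--winding cocycle. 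This is the step I expect to be most technical, and it would be handled by invoking \cite{Sh,C} directly.
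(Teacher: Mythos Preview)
Your approach is exactly the one the paper adopts: it states the lemma without proof, describing it as ``well--known \cite{PS},\cite{Po},\cite{BL1}'' and obtainable ``by a lifting argument from the Bowen--Series symbolic dynamics of $g^s:T^1 M_0\to T^1 M_0$ given by \cite{BS}, \cite{Se1},\cite{Se2}.'' The paper then remarks that parts (2) and (6) rely specifically on the explicit nature of the Bowen--Series coding in constant curvature, which matches your observation that the two--to--one claim is a combinatorial feature of boundary identifications. Your sketch is in fact more detailed than what the paper provides; the only minor point is that your claim $n_0=1$ presumes $\inf R>0$ for the Bowen--Series roof, which is true for cocompact groups but the paper hedges by allowing a general $n_0$.
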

\noindent
Most of lemma \ref{bowen} extends variable curvature, except for
 parts (2) and it immediate corollary  (6),  which we  only know how to prove using the
the explicit nature  of the Bowen--Series coding in constant negative curvature.

\subsection*{Symbolic Coordinates} The {\em symbolic coordinates} of $\om\in T^1 M$ are defined to be $(x,\xi,t)$ such that $\om=\pi(x,\xi,t)$ and
$
0\leq t<r^\ast(x)$.

We call $x=x(\om)$ the {\em $\Sigma$--coordinate}, $\xi=\xi(\om)$ the {\em $\Z^d$--coordinate}, and $t=t(\om)$ the {\em $\R$--coordinate} (of $\om$).

Some vectors have two sets of  symbolic coordinates, but the collection of such vectors intersects every horocycle at a negligible, even countable, set. Therefore such vectors are of no consequence to us.

\subsection*{The Symbolic Description of the Babillot-Ledrappier Measures}
Suppose a flow $\phi^t:X\to X$ has a Poincar\'e section $K$ with section map $T_K:K\to K$. If we represent the flow as a suspension over the section, then any $\phi$--invariant measure can be identified with the product of a $T_K$--invariant measure $\mu_K$ and the Lebesgue measure on $\R$, restricted to the suspension space. Thus any $\phi$--invariant measure is determined by the measure it induces on a Poincar\'e section.

The horocycle flow has a Poincar\'e section which can be naturally coded as $\Sigma^+\x\Z^d\x\R$. We describe the Babillot--Ledrappier measures in terms of the measures they induce on this section.

Let $S$ be the set of states of $\Sigma$, and fix some $P:S\to S$ such that $(P(a),a)$ is admissible. Define $\rho':\Sigma^+\x\Z^d\x\R\to \Sigma\x\Z^d\x\R$ by $\rho'(x^+,\xi,s)=(x,\xi,s+h(x))$ where $x_0^\infty=(x^+)_0^\infty$, and $x_{k}=P(x_{k+1})$ for $k<0$. By \cite{BM} (see also \cite{Sa})
\begin{enumerate}
\item $\rho:=\pi\circ\rho'$ maps $\Sigma^+\x\Z^d\x\R$ onto a Poincar\'e section for the horocycle flow on $T^1 M$;
\item if $\exists p,q$ s.t. $(x^+)_p^\infty=(y^+)_q^\infty$, $\xi-\xi'=f_q(x^+)-f_p(y^+)$, $s-s'=r_p(x)-r_q(x')$, then $\rho(x^+,\xi,s)$ and $\rho(y^+,\xi',s')$ lie on the same horocycle.
\end{enumerate}
There is a slight inconvenience in that  $\rho$ is countably-to-one (because of lemma \ref{bowen} part (4)). We shall deal with this problem by working with restrictions of $\rho$ to sets where it is one-to-one.

By (1), any measure $m$ on $T^1 M$ can be identified (locally, on subsets of the suspension space on which $\rho$ is one-to-one) with $\mu\x dt$, where  $\mu$ is some measure on $\Sigma^+\x\Z^d\x\R$.  By (2), if $\mu$ is invariant for the equivalence relation
\begin{equation}\label{equiv}
(x^+,\xi,s)\sim (y^+,\xi',s')\Longleftrightarrow \left\{ \begin{array}{rcl}
(x^+)_p^\infty&=&(y^+)_q^\infty\\
\xi-\xi'&=&f_q(x^+)-f_p(y^+)\\
s-s'&=&r_p(x)-r_q(x'),
\end{array}\right.
\end{equation}
then $\mu$ is invariant for the Poincar\'e map of the horocycle flow\footnote{ Here we use the general fact that a  measure is invariant for a transformation if and only if it is invariant for the corresponding orbit  relation. (A Borel measure $m$ is {\em invariant} for a Borel equivalence relation $\sim$, if $m\circ\kappa|_{\textrm{dom}(\kappa)}=m|_{\textrm{image}(\kappa)}$ for all partial Borel isomorphisms $\kappa:\textrm{dom}(\kappa)\to \textrm{image}(\kappa)$ such that $\kappa(x)\sim x$ for all $x\in \textrm{dom}(\kappa)$.)}, and therefore gives rise to a flow invariant measure on $T^1M$ (see \cite{BL} for details).

This was the way Babillot and Ledrappier constructed their family of infinite invariant measures for the horocycle flow: they constructed an infinite family of measures $\mu_\vf$ which are invariant under the equivalence relation (\ref{equiv}).

The construction uses  the thermodynamic formalism. We review some necessary facts (see e.g. \cite{PP}).
Let $\s:\Sigma^+\to\Sigma^+$ denote the {\em one}--sided version of $\Sigma$. 
The {\em topological pressure} of a continuous function $\varphi:\Sigma^+\to\R$ is the number $P_{top}(\varphi):=\sup\{h_\mu(\s)+\int\varphi d\mu\}$ where the supremum ranges over all shift invariant probability measures on $\Sigma^+$.
Define $P:\R^d\to\R$ implicitly by $u\mapsto P(u)$ where $P(u)=P$ is the root of
$$
P_{top}(-Pr+\<u,f\>)=1.
$$
This defines a $C^\infty$--diffeomorphism from $\R^d$ onto $int(\mathfrak C)$ \cite{BL}. 
This diffeomorphism can be defined canonically, and  does not depend on the particular choice of the coding, see \cite{BL}.

Any homomorphism $\vf:\Z^d\to\R$ can be identified with a vector $u_\vf\in\R^d$ by $\vf(\cdot)=\<u_\vf,\,\cdot\,\>$.
Define the operator $L_{-P(u_\vf),u_\vf}:C(\Sigma^+)\to C(\Sigma^+)$ by
$$
(L_{-P(u_\vf),u_\vf}F)(x)=\sum_{\s y=x}e^{-P(u_\vf) r(y)+\<u_\vf,f(y)\>}F(y).
$$
Ruelle's Perron-Frobenius Theorem says that there exists a probability measure $\nu_{\vf}$ and a positive
H\"older continuous function $\Psi_{\vf}$ on $\Sigma^+$ such that
$$
L_{-P(u_\vf),u_\vf}\Psi_\vf=\Psi_\vf\ ,\ L^{\ast}_{P(u_\vf),u_\vf}\nu_{\vf}=\nu_{\vf}\ ,\textrm{ and }\int\Psi_\vf d\nu_\vf=1.
$$
The Babillot--Ledrappier measure $m_\vf$ (on $T^1 M$) is given locally by $\mu_\vf\x dt$, where $\mu_\vf$ (a measure on the Poincar\'e section) is given in symbolic coordinates by
\begin{equation}\label{BLmeasures}
d\mu_\vf(x,\xi,s):=\const {e^{-P(u_\vf)s+\<u_\vf,\xi\>}d\nu_\vf d\xi ds}.
\end{equation}
This formula can be used to deduce  the following important consequences:
\begin{enumerate}
\item $m_\vf$ is quasi--invariant under the geodesic flow, and
$m_\vf\circ g^s=e^{-P(u_\vf)s}m_\vf$;
\item $\Xi_\vf=(\nabla P)(u_\vf)$ (corollary 6.1 in \cite{BL});
\item $u_\vf=-(\nabla H)(\Xi_\vf)$, where $H(\cdot)$ is minus the Legendre transform of $P(\cdot)$.
\end{enumerate}
 (The double minus sign in (3) is a convention from \cite{BL}.)

\subsection*{Symbolic Local Manifolds}
Suppose $\om$ has symbolic coordinates $(x,\xi,t)$ (i.e. $0\leq s+h(x)<r^\ast(x)$), and write $t=s+h(x)$.
The {\em symbolic local stable manifold}
of $\om=\pi(x,\xi,s+h(x))$ is defined by
$$
W^{ss}_{\loc}(\om):=\pi\{(y,\xi,s+h(y)):y_0^\infty=x_0^\infty\}.
$$
This is a subset of $\textrm{Hor}(\om)$, because of
lemma \ref{bowen} part (5).

If $W^{ss}_{\loc}(\om_1), W^{ss}_{\loc}(\om_2)$ intersect with positive measure, then they are equal up to sets of length zero. Indeed, for almost every $\om$ in the intersection  $\exists p$ s.t.
$
\om=\pi(x,\xi(\om_1),s(\om_1)+h(x))$, $x_0^\infty=x(\om_1)_0^\infty$ and $\om=\pi(\s^p x,\xi(\om_2),s(\om_2)+h(\s^p x))$ , $x_p^\infty=x(\om_2)_0^\infty$.
 This forces $x(\om_1)_p^\infty=x(\om_2)_0^\infty$,  $\xi(\om_2)=\xi(\om_1)+f_p(x(\om_1))$, $s(\om_2)=s(\om_1)-r_p(x(\om_1))$ (lemma \ref{bowen} part (1)). It follows from lemma \ref{bowen} parts (4) and (1) that $W^{ss}_{\loc}(\om_1)=W^{ss}_{\loc}(\om_2)$ up to sets of measure zero.

\begin{lem}\cite[prop. 4.5]{BL}\label{margulis}  Let $\ell_\om$ denote
 the hyperbolic length measure on the horocycle of $\om$.
If $\om=\pi(x,\xi,s+h(x))$, then
$\ell_\om[W^{ss}_{\loc}(\om)]=e^{-s}\psi(x_0,x_1,\ldots)$
where $\psi:\Sigma^+\to\R$ is H\"older continuous and positive. (In fact $\psi=\Psi_0$.)
\end{lem}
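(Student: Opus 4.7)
The plan is to reduce the computation of $\ell_\om(W^{ss}_\loc(\om))$ to a positive H\"older eigenfunction identity for the Ruelle transfer operator on $\Sigma^+$, and then to invoke uniqueness in Perron--Frobenius to identify the eigenfunction with $\Psi_0$.

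\emph{Reduction to base vectors.}  First, I would reduce to the ``base'' vectors $\om_x:=\pi(x,0,h(x))$ using two invariances of the length:  (a) deck transformations are isometries commuting with the horocycle flow and satisfy $D_\xi W^{ss}_\loc(\om)=W^{ss}_\loc(D_\xi\om)$, so $\ell_\om(W^{ss}_\loc(\om))$ is independent of the $\Z^d$-coordinate; (b) the identity $g^s\circ h^t=h^{e^{-s}t}\circ g^s$ on stable horocycles implies the length scaling $\ell_{g^s\om'}(g^s A)=e^{-s}\ell_{\om'}(A)$.  Combining (b) with $g^{-s}\pi(x,\xi,s+h(x))=\pi(x,\xi,h(x))$ (Lemma \ref{bowen}(3)) produces the $e^{-s}$ factor and reduces the problem to showing that $\psi(x_0^\infty):=\ell_{\om_x}(W^{ss}_\loc(\om_x))$ is well-defined on $\Sigma^+$ (depending only on non-negative coordinates), positive, H\"older, and equal to $\Psi_0$.

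\emph{Transfer operator identity.}  Next, I would derive a functional equation for $\psi$ by decomposing $W^{ss}_\loc(\om_{\sigma x})$ symbolically.  Its points are of the form $\pi(\sigma y,0,h(\sigma y))$ with $y_1^\infty=x_1^\infty$, and grouping by the value $y_0=a$ (over admissible $a$ with $(a,x_1)$ allowed) gives a partition $W^{ss}_\loc(\om_{\sigma x})=\bigsqcup_a A_a$.  Using $\pi\circ T_{(f,-r^\ast)}=\pi$ together with the cocycle relation $r^\ast=r+h-h\circ\sigma$, one computes
$$
\pi(\sigma y,0,h(\sigma y))=g^{r(y)}\,D_{-f(y)}\,\pi(y,0,h(y)),
$$
and since $r$ and $f$ depend only on non-negative coordinates, both $r(y)=r(a,x_1^\infty)$ and $f(y)=f(a,x_1)$ are constant throughout $A_a$.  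Thus $A_a=g^{r(a,x_1^\infty)}D_{-f(a,x_1)}W^{ss}_\loc(\om_{y^{(a)}})$ for any $y^{(a)}$ with $y^{(a)}_0^\infty=(a,x_1,x_2,\ldots)$.  Applying length scaling and deck invariance and summing over $a$ yields
$$
\psi(x_1^\infty)=\sum_{a:\,(a,x_1)\,\text{admissible}}e^{-r(a,x_1^\infty)}\,\psi(a,x_1^\infty)=(L_{-r}\psi)(x_1^\infty),
$$
so $\psi$ is a positive eigenfunction of $L_{-r}=L_{-P(0)r,0}$ with eigenvalue $1$ (the constant $P(0)$ being chosen so that this is the legitimate Ruelle eigenvalue); H\"older regularity is inherited from that of $r$ and $h$.

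\emph{Identification and main obstacle.}  By Ruelle's Perron--Frobenius theorem, the positive H\"older eigenfunction with eigenvalue $1$ is unique up to a positive multiplicative constant, giving $\psi=c\Psi_0$.  The constant $c=1$ follows by matching two disintegrations of the Liouville measure $m_0$ near a base vector: the symbolic $d\mu_0\times dt$ product (with $d\mu_0$ proportional to $d\nu_0\,d\xi\,ds$) and the geometric $d\ell_\om\,d\mu^\perp$ product along horocycles, together with the normalisation $\int\Psi_0\,d\nu_0=1$.  The main technical obstacle is the shift-identity bookkeeping in the second step: one has to track that the $\R$-coordinate correction in the shift identity is \emph{exactly} a geodesic flow step of length $r$ (not $r^\ast$), producing an $L_{-r}$-eigenvalue equation at eigenvalue $1$ rather than some other transfer operator; and one must check that the partition $W^{ss}_\loc(\om_{\sigma x})=\bigsqcup_a A_a$ is disjoint modulo an $\ell$-null set, which is guaranteed by parts (2) and (6) of Lemma \ref{bowen}.
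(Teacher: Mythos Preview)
The paper does not give its own proof of this lemma; it is simply quoted from \cite[Prop.~4.5]{BL} and used as a black box. Your proposal is a correct reconstruction of the standard Babillot--Ledrappier argument: reduce via geodesic contraction and deck invariance to the base function $\psi(x_0^\infty)=\ell(W^{ss}_\loc(\om_x))$, decompose $W^{ss}_\loc(\om_{\sigma x})$ along the $\sigma$--preimages $a\mapsto(a,x_1^\infty)$ to obtain the eigenvalue equation $L_{-r}\psi=\psi$, and then identify $\psi$ with $\Psi_0$ by Perron--Frobenius uniqueness. The bookkeeping you flag (that the $\R$--correction is exactly $r$, not $r^\ast$, and that the pieces $A_a$ overlap only on an $\ell$--null set by Lemma~\ref{bowen}(6)) is handled correctly.

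Two small points are worth making explicit. First, the identification $L_{-r}=L_{-P(0)r,0}$ requires $P(0)=1$, which is not automatic from the definition of $P(\cdot)$ but follows because the geodesic flow on a surface of curvature $-1$ has topological entropy $1$, so that $P_{top}(-r)=0$. Second, your sketch for the exact normalisation $c=1$ via matching disintegrations of $m_0$ is correct in spirit but would take some care to execute; for the purposes of this paper it is inessential, since the distortion estimates (Lemmas~\ref{basichol1}--\ref{basichol2}) and the constants in the Key Lemma only use positivity, H\"older continuity, and the ratio $\max\psi/\min\psi$.
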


\section{Proof that a Generic Vector has Asymptotic Cycle in
$\textrm{\em int}(\mathfrak C)$}\label{section_converse}
In this section we prove
\begin{prop}\label{converseprop}
Let $M$ be a $\Z^d$--cover of a compact hyperbolic surface. If $\om_0$ is generic for some horocycle invariant Radon measure $m$, then there exists a homomorphism $\vf:\Z^d\to\R$ and $c>0$ such that $m=c m_\vf$ and  $\Xi(\om)=\Xi_\vf$.
\end{prop}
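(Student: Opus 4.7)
The plan is to prove the two assertions of the proposition in turn: first that $m$ is a scalar multiple of some $m_\vf$, and then that $\Xi(\omega_0) = \Xi_\vf$.

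For the first assertion, observe that the ratios $\int_0^T f(h^t\omega_0)\,dt / \int_0^T g(h^t\omega_0)\,dt$ determine $m$ up to a positive scalar. By Sarig's classification \cite{Sa}, every ergodic horocycle-invariant Radon measure is a scalar multiple of some $m_\vf$; writing $m$ (which may \emph{a priori} be non-ergodic) as a superposition $m = \int m_{\vf_u}\,d\nu(u)$ over this family (with $\vf_u(\cdot) = \langle u,\cdot\rangle$, $u \in \R^d$), and using the deck equivariance $(D_\xi)_\ast m_{\vf_u} = e^{-\langle u,\xi\rangle}m_{\vf_u}$, the ratio
$$\frac{\int f\circ D_\xi\, dm}{\int f\, dm} = \frac{\int e^{-\langle u,\xi\rangle}\alpha_u(f)\,d\nu(u)}{\int \alpha_u(f)\,d\nu(u)}, \qquad \alpha_u(f) := \int f\, dm_{\vf_u},$$
must be both $f$-independent and of the form $e^{-\langle u^\ast,\xi\rangle}$ for some $u^\ast \in \R^d$. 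Varying $f$ and invoking uniqueness of Laplace transforms on $\Z^d$ forces $\nu$ to be a Dirac mass at $u^\ast$, so $m = c\,m_\vf$ with $\vf = \langle u^\ast,\cdot\rangle$ uniquely determined.

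For the second assertion, place $\omega_0 = \pi(x_0,\xi_0,t_0)$ in the symbolic coordinates of lemma \ref{bowen}. Applying the geodesic flow shifts the symbolic trajectory via $\pi \circ T_{(f,-r^\ast)} = \pi$: one finds $\xi_T(\omega_0) = \xi_0 + f_{n(T)}(x_0)$, where $n(T)$ is defined by $r^\ast_{n(T)}(x_0)\approx T$. Hence $\Xi(\omega_0)$ exists and equals $\Xi_\vf = (\nabla P)(u_\vf)$ precisely when $f_n(x_0)/r^\ast_n(x_0)$ converges to the ratio $\int f\,d\mu / \int r^\ast\,d\mu$, where $\mu$ is the shift-invariant equilibrium state on $\Sigma$ for the potential $-P(u_\vf)r + \langle u_\vf,f\rangle$. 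This convergence holds $\mu$-almost surely by Birkhoff; the task is to promote it to the specific point $x_0$ using only genericity of $\omega_0$ against continuous compactly supported test functions.

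My strategy is to transfer Gibbs statistics to $\omega_0$ through the symbolic description of $m_\vf$. By formula (\ref{BLmeasures}), lemma \ref{margulis}, and the Poincar\'e section $\rho$, the hyperbolic length measure on a horocycle segment locally agrees --- up to a smooth factor --- with Gibbs-weighted counting of symbolic cylinders in $\Sigma^+$, twisted by $e^{\langle u_\vf,\xi\rangle}$ across cells $\xi \in \Z^d$. Testing genericity of $\omega_0$ against continuous approximations of indicator functions of product sets in $\Sigma^+ \times \Z^d$ then recovers Gibbs-weighted symbolic statistics of the particular trajectory $x_0$; summing contributions weighted by truncations of $f_n$ and $r^\ast_n$ produces the desired ratio in the limit. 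The main obstacle is that the cocycles $f$ and $r^\ast$ are unbounded on $\Sigma^+$, so the ratio ergodic theorem does not apply to them directly. I expect the resolution to come from the ``approximate exchangeability structure for the Lebesgue measure on horocycles'' mentioned in the introduction: the equivalence (\ref{equiv}) permits a symbolic reshuffling of cell visits along a horocycle without altering the hyperbolic length measure, allowing unbounded cocycle sums to be rearranged into bounded, compactly supported contributions accessible through genericity. Controlling the remainder terms of this rearrangement --- particularly for symbolic trajectories that drift atypically in $\Z^d$ --- should be the most delicate step.
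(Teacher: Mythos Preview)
Your proposal is not a proof; it is a programmatic outline, and both parts have genuine gaps.

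For the first assertion, the paper simply invokes \cite{Sa}: every horocycle-ergodic invariant Radon measure on a $\Z^d$-cover is a multiple of some $m_\vf$. Your attempt to treat a possibly non-ergodic $m$ via a Laplace-transform argument is not justified: you assert that $\int f\circ D_\xi\,dm/\int f\,dm$ must be $f$-independent and of the form $e^{-\langle u^\ast,\xi\rangle}$, but you give no reason for either claim. Genericity of $\om_0$ determines all the ratios $\int f\,dm/\int g\,dm$, but it does not by itself force this particular ratio to be independent of $f$.

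For the second assertion, the proposal never becomes an argument. Two concrete problems:

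\emph{(a) A false premise.} You write that ``the cocycles $f$ and $r^\ast$ are unbounded on $\Sigma^+$''. They are not: $f(x)=f(x_0,x_1)$ takes finitely many values and $r^\ast$ is continuous on the compact space $\Sigma$. The difficulty is not unboundedness; it is that genericity controls \emph{horocycle} averages of $\om_0$, whereas $\Xi(\om_0)$ is a \emph{geodesic} quantity.

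\emph{(b) The missing mechanism.} The paper's proof bridges horocycle and geodesic information by a geometric observation you do not mention: the horocycle arc $A_T=\{h^t\om_0:0\le t\le T\}$ is contracted by $g^{\ln T}$ to an arc of length $\approx 1$, so every $\om\in A_T$ satisfies $\xi(g^{\ln T}\om)=\xi(g^{\ln T}\om_0)+O(1)$. This makes the symbolic quotient $\sum_i f_N(\s^{iN}x(\om))/\sum_i r^\ast_N(\s^{iN}x(\om))$ essentially constant on $A_T$ and equal to $\xi_{T^\ast}(\om_0)/T^\ast+o(1)$. What remains is to show that its $\l_T$-expectation is close to $\Xi_\vf$, and \emph{this} is where the exchangeability enters: the paper builds explicit block-exchange bijections $\kappa_{n,N}$ of the horocycle (swapping the $0$th and $n$th $N$-blocks of the code, with bridge words) and proves they have uniformly bounded distortion for the length measure. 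These maps carry the set $\{n\text{th block bad}\}$ into $\{\text{0th block bad}\}\cap\wt{M}_0(C)$, whose $\l_T$-measure is small by genericity (via Egoroff on the a.e.\ convergence $f_N/r^\ast_N\to\Xi_\vf$). Your proposal alludes to ``symbolic reshuffling'' but supplies none of this: no construction of the maps, no distortion bounds, no key lemma, and no passage from block statistics to the asymptotic cycle. Phrases like ``I expect the resolution to come from\ldots'' and ``should be the most delicate step'' are where the proof needs to be.
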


The proof is as follows.
Suppose $\om_0$ is generic for some horocycle ergodic invariant Radon measure $m$. By \cite{Sa}, there is a homomorphism $\vf:\Z^d\to\R$ and $c>0$ such that $m=c m_\vf$. Of course if $\om_0$ is generic for $cm_\vf$, then it is generic for $m_\vf$, therefore we may assume w.l.o.g. that $m=m_\vf$. We saw above that  $\Xi(\om)=\Xi_\vf$ $m_\vf$--a.e., for some $\Xi_\vf\in \textrm{int}(\mathfrak C)$. We aim at showing that $\Xi(\om_0)=\Xi_\vf$.

Recall that $\wt{M}_0\subset T^1 M$ is the set of vectors whose $\Z^d$--coordinate is equal to zero, and define the following objects:
\begin{eqnarray}
A_T&:=&\{h^t(\om_0):0\leq t\leq T\},\notag\\
\l_T&:=&\textrm{the normalized length measure on }A_T\cap\wt{M}_0\label{lambda}\\
&=&\frac{1}{\int_0^T 1_{\wt{M}_0}(h^t(\om_0))dt}\int_0^T
1_{\wt{M}_0}(h^t\om_0) \d_{h^t(\om_0)}dt\ \  (\d_x:=\textrm{Dirac at $x$}),\notag\\
\L_n(N,\e_0)&:=& \left\{\om\in\wt{M}_0:\om=\pi(x,\un{\xi},s)
\textrm{ and }\right.\left.\biggl\|\frac{f_N(\s^{nN}x)}{r_N^\ast(\s^{nN}x)}-\Xi_\vf\biggr\|<\e_0\right\}.
\label{Lambda}
\end{eqnarray}
Recall that $n_0$ is a constant such that $\min r^\ast_{n_0}>0$ (lemma \ref{bowen} part (1)). The key to the proof of proposition \ref{converseprop} is the following lemma:
\begin{lem}[Key Lemma]\label{keyineqlemma}
There are $C_0,T_0>0$  such that for every $\e_0>0$ there are constants $N(\e_0)>n_0$, $K(\e_0)>0$, and  $\tau(\e_0)$ such that for all  $T>\tau(\e_0)$
\begin{equation}\label{keyineq}
\sup_n \l_T\left\{\om:r_{nN(\e_0)}^\ast(x(\om))<\ln\tfrac{T}{T_0}-K(\e_0)\textrm{ and }\om\not\in \Lambda_n(N(\e_0),\e_0)
\right\}\leq
C_0\e_0.
\end{equation}
\end{lem}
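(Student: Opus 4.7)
I would transport the estimate to symbolic coordinates via the coding $\pi$, use the genericity of $\om_0$ to compare $\l_T$ (suitably normalized) with $m_\vf|_{\wt M_0}$, and then exploit the shift-invariance of the symbolic version of $m_\vf$ to produce a bound uniform in $n$. Both conditions defining the set in question are cylinder conditions on the symbolic coordinate $x(\om)$: the event $\om \notin \L_n(N,\e_0)$ depends only on the block $x_{nN},\ldots,x_{(n+1)N-1}$, while $r^\ast_{nN}(x(\om)) < \ln(T/T_0) - K$ is a cylinder condition on the prefix $x_0,\ldots,x_{nN-1}$.

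\medskip

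\emph{Step 1 (symbolic estimate, uniform in $n$).} From formula (\ref{BLmeasures}), integrating out the $\xi$ and $s$ variables over $\wt M_0$ shows that the $x$-marginal of $\mu_\vf$ is a constant multiple of the $\s$-invariant equilibrium state $\Psi_\vf\, d\nu_\vf$ on $\Sigma^+$. The $m_\vf$-almost sure equality $\Xi(\om) = \Xi_\vf$ translates, via the ergodic theorem for $\s$ under this equilibrium measure, into $f_n(x)/r^\ast_n(x) \to \Xi_\vf$ a.s. Since the equilibrium measure is $\s$-invariant, the distribution of $f_N(\s^{nN}x)/r^\ast_N(\s^{nN}x)$ does not depend on $n$, and a Markov inequality applied to $|f_N/r^\ast_N - \Xi_\vf|$ allows us to choose $N(\e_0) > n_0$ large enough so that, uniformly in $n$,
$$
m_\vf\bigl(\wt M_0 \cap \L_n(N(\e_0),\e_0)^c\bigr) \le C_1\, \e_0,
$$
with $C_1$ independent of $\e_0$.

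\medskip

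\emph{Step 2 (conversion to $\l_T$; the main obstacle).} By the genericity of $\om_0$ for $m_\vf$, applied to pairs of continuous compactly supported functions that approximate $\mathbf{1}_{\wt M_0}$ and $\mathbf{1}_{\wt M_0}\cdot \mathbf{1}_{\L_n(N,\e_0)^c}$ from above and below (legitimate because $\del\wt M_0$ and the boundaries of the cylinders in question are $m_\vf$-null and clopen in the subshift topology), one obtains, for each fixed $n$,
$$
\limsup_{T\to\infty}\ \l_T\bigl(\wt M_0 \cap \L_n(N(\e_0),\e_0)^c\bigr) \le \frac{1}{m_\vf(\wt M_0)}\, m_\vf\bigl(\wt M_0 \cap \L_n(N(\e_0),\e_0)^c\bigr).
$$
The main obstacle, and the role of $K(\e_0)$ and the length constraint, is to make this convergence uniform in $n$: the test functions involve $(n+1)N(\e_0)$ symbolic coordinates, and a naive approximation would yield $n$-dependent errors. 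The key observation is that both cylinder conditions are constant on each symbolic local stable manifold piece, so the bad set decomposes into a disjoint union of entire such pieces, whose hyperbolic lengths along the horocycle are governed by lemma \ref{margulis} through $e^{-r^\ast_{nN}}\psi(x_0, x_1, \ldots)$. The constraint $r^\ast_{nN} < \ln(T/T_0) - K(\e_0)$ forces these pieces to have length above a definite positive fraction of the natural resolution scale of $A_T$, making the ratio ergodic approximation for $\om_0$ uniform in $n$ for $T > \tau(\e_0)$. Combining with Step 1 gives the bound with $C_0 := C_1/m_\vf(\wt M_0)$.
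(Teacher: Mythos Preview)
Your Step 1 is essentially right (with the minor caveat that the $x$-marginal of $m_\vf|_{\wt M_0}$ is not literally the equilibrium state $\Psi_\vf\,d\nu_\vf$ --- there is a weight coming from integrating the $s$-variable over $[0,r^\ast(x))$ --- but it is uniformly comparable to it, so the uniform-in-$n$ bound on $m_\vf(\wt M_0\setminus\L_n)$ does follow).

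The real gap is Step 2, and you have correctly located it but not closed it. Genericity of $\om_0$ is a qualitative hypothesis: for each fixed continuous $f,g$ the ratio of Birkhoff integrals converges, with no information on the rate. Your target sets $\L_n(N,\e_0)^c$ depend on coordinates $nN,\ldots,(n+1)N-1$ and hence change with $n$; the range of relevant $n$ grows like $T^\ast/\min r^\ast_N$. Saying that the constraint $r^\ast_{nN}<T^\ast-K$ forces the relevant cylinder pieces to have length of order $T_0/T$ does not help: that only bounds the \emph{number} of such pieces by something of order $T$, while the error in the ratio-ergodic approximation on each piece is not known to be $o(1/T)$ --- it is only known to be $o(1)$ for each piece separately. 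There is no compactness or monotonicity argument here that would make the convergence uniform over this growing family, so the sketch does not produce a $\tau(\e_0)$ that works for all $n$ at once.

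The paper takes a completely different route that avoids this uniformity issue altogether. Rather than estimating $\l_T(\L_n^c)$ for varying $n$, it constructs, for each $n$, a measurable self-map $\kappa_{n,N}$ of the horocycle of $\om_0$ that exchanges the $0$-th and $n$-th $N$-blocks of the $\Sigma$-coordinate (inserting short bridge words to keep admissibility). These maps have \emph{uniformly bounded distortion} with respect to the hyperbolic length measure, and move points a \emph{uniformly bounded} distance in $T^1M$ (Lemma \ref{properties_of_kappa_nN}). The constraint $r^\ast_{nN}(x(\om))<T^\ast-K$, with $K=K(\e_0)$ chosen of size $\sim N\max r^\ast$, is used not to control the size of cylinder pieces but to guarantee that $\kappa_{n,N}$ keeps points inside the arc $A_T$ (this is where the constant $T_0$ enters). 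The upshot is the inclusion
\[
\kappa_{n,N}\bigl(A_T\cap\wt M_0\cap\{r^\ast_{nN}<T^\ast-K\}\setminus\L_n\bigr)\ \subset\ A_T\cap\wt M_0(C)\setminus\L,
\]
where $\L=\L_0(N,\e_0)$ is a \emph{fixed} set independent of $n$, and $\wt M_0(C)$ is a fixed bounded enlargement of $\wt M_0$. Bounded distortion then gives $\l_T(\{r^\ast_{nN}<T^\ast-K\}\setminus\L_n)\le C_r\,\l_T^C(\wt M_0(C)\setminus\L)$, and a \emph{single} application of genericity to the fixed set $\L$ bounds the right-hand side by $2C_r\e_0$ for all $T>\tau(\e_0)$. (A second family of holonomies $\vartheta_{T^\ast,\om_1,\om_2}$ handles the boundary pieces of $g^{T^\ast}A_T$; this costs another bounded factor.)

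So the missing idea is this block-exchange holonomy, which converts the $n$-dependent event into a fixed one at the cost of a bounded Jacobian. Your proposal reduces the problem to an equidistribution statement that is not available under the bare genericity hypothesis.
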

The proof is somewhat technical, so we postpone it, and first explain   how to use the lemma to prove proposition \ref{converseprop}.

Fix $\e_0$ and set $N=N(\e_0)$, $K=K(\e_0)$, $\L_n:=\L_n(N(\e_0), \e_0)$, and $T^\ast:=\ln(T/T_0)$. Abbreviate
$
\{r^\ast_{nN}<T^\ast-K\}:=\{\om\in T^1M:r^\ast_{nN}(x(\om))<T^\ast-K \}.
$

Working in the symbolic model for $T^1 M$,
define the following functions, which we view as random variables on $A_T$:
 $A_k:=f_N\circ \s^{kN}\,1_{\{r_{kN}^\ast<T^\ast-K\}}$
and $B_k:=r^\ast_N\circ\s^{kN}1_{\{r_{kN}^\ast<T^\ast-K\}}$. Then for all $T$ large enough
$$
\hspace{-1.8cm}\E_{\l_T}\!\!\left(\frac{\sum_i A_i}{\sum_i B_i}\right)\!\!
=\E_{\l_T}\left(
\sum_i \frac{B_i}{\sum_j B_j}\left[\left(\frac{A_i}{B_i}-\Xi_\vf\right)1_{\{r^\ast_{iN}<T^\ast-K\}}\right]
\right)+\Xi_\vf\\
$$
\begin{eqnarray*}
&=&\!\!\!\!\Xi_\vf\pm
\sum_i \left\|\frac{B_i}{\sum_j B_j}\right\|_\infty\left\|\left(\frac{A_i}{B_i}-\Xi_\vf\right)1_{\{r^\ast_{iN}<T^\ast-K\}}\right\|_1\\
&=&\!\!\!\!\Xi_\vf\pm \sum_{i=1}^{[(T^\ast/\min r_{n_0}^\ast)+n_0]/N } \!\!\left(\frac{N\max r^\ast}{T^\ast-K- N\max r^\ast}\right)\left\|\left(\frac{A_i}{B_i}-\Xi_\vf\right)1_{\{r^\ast_{iN}<T^\ast-K\}}\right\|_1.
\end{eqnarray*}

We estimate the $L^1$-norm by breaking the support of $\l_T$  into $\wt{M}_0\cap\L_i$ and $\wt{M}_0\setminus\L_i$:
$$
\hspace{-8cm}\left\|\left(\frac{A_i}{B_i}-\Xi_\vf\right)1_{\{r^\ast_{iN}<T^\ast-K\}}\right\|_1
$$
\begin{eqnarray*}
&\leq&
\e_0{\l_T(\wt{M}_0\cap\Lambda_i)}+\left(\frac{N\max\|f\|_\infty}{\lfloor N/n_0\rfloor\min r^\ast_{n_0}}+\|\Xi_\vf\|\right)
\l_T(\wt{M}_0\cap\{r_{iN}^\ast<T^\ast-K\}\setminus\Lambda_i)\\
&\leq &\e_0+\left(\frac{N\max\|f\|_\infty}{\lfloor N/n_0\rfloor\min r^\ast_{n_0}}+\|\Xi_\vf\|\right)\sup_i \l_T(\wt{M}_0\cap \{r_{iN}^\ast<T^\ast-K\}\setminus \Lambda_i).
\end{eqnarray*}

Thus, the key lemma implies that for all $T$ large enough
\begin{equation}\label{limit}
\left\|\E_{\l_T}\left(\frac{\sum_i A_i}{\sum_i B_i}\right)-\Xi_\vf\right\|\leq \const\e_0
\end{equation}
with the constant independent of $T$ and $\e_0$.

But the quotient whose expectation we are calculating is nearly constant!
For every $\om\in\{h^t(\om_0):0<t<T\}$, $\sum_i B_i=T^\ast\pm (K+N\max r^\ast)$, and
\begin{eqnarray*}
\sum_i A_i
&=&\xi(g^{T^\ast}\om)\pm \left(K+N\right)n_0\frac{\max\|f\|}{\min r^\ast_{n_0}}\\
&=&\xi(g^{T^\ast}\om_0)\pm \left[\left(K+N\right)n_0\frac{\max\|f\|}{\min r^\ast_{n_0}}+\|\xi(g^{T^\ast}\om)-\xi(g^{T^\ast}\om_0)\|\right].
\end{eqnarray*}
Now  $\dist(g^{T^\ast}(\om), g^{T^\ast}(\om_0))\leq e^{-T^\ast}T=T_0$, so $\|\xi(g^{T^\ast}\om)-\xi(g^{T^\ast}\om_0)\|$ is bounded by some constant independent of $T^\ast$. Thus we obtain $\sum_i A_i=\xi(g^{T^\ast}\om_0)\pm \const$, where the constant is independent of $T^\ast$.

We see that
$$
\E_{\l_T}\left(\frac{\sum_i A_i}{\sum_i B_i}\right)=
\frac{\xi(g^{T^\ast}\om_0)\pm \const}{T^\ast\pm \const}=
[1+o(1)] \frac{1}{T^\ast}\xi(g^{T^\ast}\om_0)+o(1).
$$
Comparing this with (\ref{limit}), we see that for  $T$ large,
$\|\frac{1}{T^\ast}\xi(g^{T^\ast}\om_0)-\Xi_\vf\|<\const\e_0$, with the constant independent of $\e_0$.
Since $\e_0$ was arbitrary, $\om_0$ has asymptotic cycle $\Xi_\vf$, and  proposition \ref{converseprop} is proved (assuming lemma
\ref{keyineqlemma}).

\medskip
The remainder of the section contains the proof of  lemma \ref{keyineqlemma}. The proof requires  the construction of certain self maps of horocycles. These are constructed and studied in the next subsections.

\subsection*{Preparations I: Distortion Estimates}
We find uniform bounds on the Radon-Nikodym derivative of certain maps between local symbolic stable manifolds, equi\-pped with the hyperbolic length measure.

Define an equivalence relation on $\Sigma$  via
$$
x\sim y\Leftrightarrow \exists p,q\ \textrm{ s.t. }x_p^\infty=y_q^\infty
$$
(the equivalence class of $x$ can be viewed as the weak stable ``manifold'' of $x$ for the action of the shift).
If $x,y$ are  non-eventually periodic, then  the following are independent of the choice of $p,q$:
\begin{equation}\label{cocycles}
\begin{array}{rcl}
R^+(x,y)&:=&r_{q}(y)-r_{p}(x),\\
R(x,y)&:=&r_{q}(y)-r_{p}(x)+h(y)-h(x)=\lim\limits_{n\to\infty}[r^\ast_{q+n}(y)-r^\ast_{p+n}(x)],\\
F(x,y)&:=&f_p(x)-f_q(y)=\lim\limits_{n\to\infty}[f_{p+n}(x)-f_{q+n}(y)].\\
\end{array}
\end{equation}
These are minus the functions appearing in lemma \ref{bowen}, part 5. Note that $R^+(x,y)$ and $F(x,y)$ only depend on $x_0^\infty, y_0^\infty$.

Fix some admissible sequence  $x_0^\infty$ and two words $\un{a}, \un{b}$ (not necessarily of the same length) such that $(\un{a},x_0),(\un{b},x_0)$ are admissible and such that $a_0=b_0$. Define
$$
\kappa^\ast:\{w\in\Sigma: w_{-|\un{a}|}^\infty=\un{a}x_0^\infty\}\to\{w\in\Sigma: w_{-|\un{b}|}^\infty=\un{b}x_0^\infty\}
$$
by $$
\kappa^\ast(w)=\begin{cases}
(w_{-\infty}^{-|\un{a}|-1},\un{b},\dot{x}_0^\infty) & w=(w_{-\infty}^{-|\un{a}|-1},\un{a},\dot{x}_0^\infty)\\
(w_{-\infty}^{-|\un{b}|-1},\un{a},\dot{x}_0^\infty) & w=(w_{-\infty}^{-|\un{b}|-1},\un{b},\dot{x}_0^\infty)\\
w &\textrm{otherwise},
\end{cases}
$$
where the zero coordinate is at the beginning of the dotted word.
This induces the following self--map of $W^{ss}(\pi(x,\xi,s+h(x))$:
$$
\kappa[\pi(w,\xi,s+h(w))]=\pi(\kappa^\ast(w),\xi, s+h(\kappa^\ast(w))).
$$
This map  is well--defined and one--to--one on $W^{ss}_{\loc}(\pi(x,\xi,s+h(x)))$ minus a countable set (Lemma \ref{bowen}, part 2).

\begin{lem}\label{basichol1}
$\kappa:W^{ss}_{\loc}(\pi(x,\xi,s+h(x)))\to W^{ss}_{\loc}(\pi(x,\xi,s+h(x)))$ is absolutely continuous with respect to the hyperbolic length measure $\ell$, and there is some constant $D'>1$, independent of $x_0^\infty$, $\un{a}$, and $\un{b}$ such that
$$
(D')^{-1}\exp[-|R^+(\un{a} x_0^\infty,\un{b} x_0^\infty)|]\leq \frac{d\ell\circ\kappa}{d\ell}\leq D'\exp[|R^+(\un{a} x_0^\infty,\un{b} x_0^\infty)|].
$$
\end{lem}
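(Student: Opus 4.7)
My plan is to compute $d\ell\circ\kappa/d\ell$ as the pointwise limit of ratios of lengths of finer and finer sub-arcs of $W^{ss}_{\loc}(\om)$, using Lemma \ref{margulis} to evaluate each such length.

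The central length computation goes as follows. For an admissible past word $\un{w}$ of length $m$ with $(\un{w},x_0)$ admissible, set
\[
S_{\un{w}}:=\pi\{(y,\xi,s+h(y)):y_{-m}^\infty=\un{w}x_0^\infty\}\subset W^{ss}_{\loc}(\om).
\]
Substituting $z=\sigma^{-m}y$ (so $z_0^\infty=\un{w}x_0^\infty$) and iterating Lemma \ref{bowen}(4) $m$ times, together with the identity $r^\ast_m=r_m+h-h\circ\sigma^m$, rewrites
\[
S_{\un{w}}=\pi\{(z,\xi-f_m(\un{w}x_0),(s+r_m(\un{w}x_0^\infty))+h(z)):z_0^\infty=\un{w}x_0^\infty\},
\]
which is itself a symbolic local stable manifold (possibly requiring further applications of $\pi\circ T_{(f,-r^\ast)}=\pi$ to return the time parameter to the fundamental domain). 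Applying Lemma \ref{margulis}, and using the transfer-operator eigenfunction identity $L\psi=\psi$ to absorb any such renormalization, yields
\[
\ell(S_{\un{w}})=e^{-s-r_m(\un{w}x_0^\infty)}\,\psi(\un{w}x_0^\infty).
\]

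To control $d\ell\circ\kappa/d\ell$, I fix a past word $\un{c}$ of length $k$ such that $(\un{c},\un{a})$ and $(\un{c},\un{b})$ are admissible; then $\kappa$ maps $S_{\un{c}\un{a}}$ bijectively onto $S_{\un{c}\un{b}}$, so
\[
\frac{\ell(S_{\un{c}\un{b}})}{\ell(S_{\un{c}\un{a}})}=\exp\bigl(r_{k+|\un{a}|}(\un{c}\un{a}x_0^\infty)-r_{k+|\un{b}|}(\un{c}\un{b}x_0^\infty)\bigr)\,\frac{\psi(\un{c}\un{b}x_0^\infty)}{\psi(\un{c}\un{a}x_0^\infty)}.
\]
Splitting each Birkhoff sum at index $k$ reduces the exponent to $[r_k(\un{c}\un{a}x_0^\infty)-r_k(\un{c}\un{b}x_0^\infty)]-R^+(\un{a}x_0^\infty,\un{b}x_0^\infty)$. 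The bracket is bounded uniformly in $k,\un{c},\un{a},\un{b},x_0^\infty$ by $\var(r)<\infty$, because $\sigma^j(\un{c}\un{a}x_0^\infty)$ and $\sigma^j(\un{c}\un{b}x_0^\infty)$ agree on their first $k-j$ coordinates while $r$ is H\"older. The $\psi$-ratio lies in $[\min\psi/\max\psi,\max\psi/\min\psi]$, a positive finite interval since $\psi$ is continuous and positive on the compact space $\Sigma^+$. Letting $k\to\infty$ and invoking Lebesgue differentiation on $W^{ss}_{\loc}(\om)$ produces the pointwise derivative, which equals $e^{-R^+(\un{a}x_0^\infty,\un{b}x_0^\infty)}\cdot E$ with $E\in[(D')^{-1},D']$ and $D'$ depending only on $\var(r),\max\psi,\min\psi$. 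Since $e^{-R^+}\in[e^{-|R^+|},e^{|R^+|}]$, the claimed bounds follow.

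The main obstacle I anticipate is the parenthetical in the length computation: when $s+r_m(z)\geq r^\ast(z)$ the representation $(z,\xi-f_m,(s+r_m)+h(z))$ lies outside the fundamental domain, and identifying $S_{\un{w}}$ with a local stable manifold in the sense of Lemma \ref{margulis} requires further iterated applications of $\pi\circ T_{(f,-r^\ast)}=\pi$, which shift $z$ forward and modify both the height and the symbolic future. Checking that these corrections combine consistently via $L\psi=\psi$ and the $e^{-\tau}$ scaling of Margulis length under the geodesic flow, to yield exactly $e^{-s-r_m(\un{w}x_0^\infty)}\psi(\un{w}x_0^\infty)$, is where the bulk of the careful bookkeeping lies.
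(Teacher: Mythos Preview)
Your approach is essentially the same as the paper's: both compute the lengths of nested cylinder-arcs $S_{\un{c}\un{a}}$ via the formula $\ell(S_{\un{w}})=e^{-s-r_m(\un{w}x_0^\infty)}\psi(\un{w}x_0^\infty)$, form the ratio $\ell(S_{\un{c}\un{b}})/\ell(S_{\un{c}\un{a}})$, and bound it using $\var(r)$ and $\max\psi/\min\psi$; the constant $D'$ that emerges is identical.

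Your anticipated ``main obstacle'' is a non-issue, and the paper's proof shows why: one never needs to return the time coordinate to the fundamental domain or invoke $L\psi=\psi$. The point is that the length formula of Lemma~\ref{margulis} is valid for \emph{any} real $s$, not just $0\le s+h(x)<r^\ast(x)$, because $\ell\circ g^\tau=e^{-\tau}\ell$ on horocycles and the set $\pi\{(z,\xi',s'+h(z)):z_0^\infty=\un{w}x_0^\infty\}$ is exactly the geodesic flow image $g^{s'-s''}$ of a genuine symbolic local stable manifold (with $s''$ in the correct range). In the paper's computation this appears as the terse justification ``$\because\ \ell\circ g^s=e^{-s}\ell$'' at the final step. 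So you may drop all the bookkeeping about iterated applications of $\pi\circ T_{(f,-r^\ast)}=\pi$ and the transfer operator identity; the scaling under $g^\tau$ does all the work.
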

\begin{proof}
Split the domain of $\kappa$ into three parts: $A$, where it flips $\un{a}$ to $\un{b}$, $B$ where it flips $\un{b}$ to $\un{a}$, and $C$, where $\kappa=id$.

We estimate the derivative on $A$.
Fix $n$ and a word $(x'_{-n},\ldots,x'_{-1})$ such that $(x_{-|\un{a}|}',\ldots,x_{-1}')=\un{a}$. We study the distortion in the length of the horocycle piece
$
K_{x_{-n}',\ldots,x_{-1}'}:=\{
\pi(w,\xi,s+h(w)):w_{-n}^\infty=((x')_{-n}^{-1},x_0^\infty)\}$:
$$
\hspace{-3cm}\ell[K_{x_{-n}',\ldots,x_{-1}'}]=\ell[\{
\pi(w,\xi,s+h(w)):w_{-n}^\infty=((x')_{-n}^{-1},x_0^\infty)\}]
$$
\begin{eqnarray*}
&=&\ell[\{
\pi(\s^{-n}w,\xi-f_n(\s^{-n}w),s+h(w)+r_n^\ast(\s^{-n}w)):w_{-n}^\infty=((x')_{-n}^{-1},x_0^\infty)\}]\\
&=&\ell[\{
\pi(w',\xi,s+h(\s^n w')+r_n^\ast(w')):(w')_{0}^\infty=((x')_{-n}^{-1},x_0^\infty)\}]\ \ (\because \ell\circ D_{f_n}=\ell)\\
&=&\ell[\{
\pi(w',\xi,s+h(w')+r_n(w')):(w')_{0}^\infty=((x')_{-n}^{-1},x_0^\infty)\}]\\
&=&e^{-r_{n}(x'_{-n},\ldots x'_{-1},x_0^\infty)}\psi(x'_{-n},\ldots x'_{-1},x_0^\infty)\ \ \  \ (\because \ell\circ g^s=e^{-s}\ell).
\end{eqnarray*}
A similar calculation shows that
$$
(\ell\circ\kappa)[K_{x_{-n}',\ldots,x_{-1}'}]=e^{-r_{n+|\un{b}|-|\un{a}|}(x'_{-n},\ldots x'_{-|\un{a}|-1},\un{b},x_0^\infty)}\psi(x'_{-n},\ldots x'_{-|\un{a}|-1},\un{b},x_0^\infty).
$$
Dividing, we see that
\begin{eqnarray*}
\frac{(\ell\circ\kappa)[K_{x'_{-n},\ldots,x'_{-1}}]}{\ell[K_{x'_{-n},\ldots,x'_{-1}}]}&=&\!\!\!\!\!\frac{\exp[r_n(x'_{-n},\ldots x'_{-1},x_0^\infty)]}{\exp[r_{n+|\un{b}|-|\un{a}|}(x'_{-n},\ldots x'_{-|\un{a}|-1},\un{b},x_0^\infty)]}\\
&&\hspace{4cm}\x\frac{\psi(x'_{-n},\ldots x'_{-|\un{a}|-1},\un{b},x_0^\infty)}{\psi(x'_{-n},\ldots,x'_{-1}x_0^\infty)}\\
&\leq &\!\!\!\!\! \left(\frac{\max\psi}{\min\psi}\right)e^{|R^+(\un{a}x_0^\infty,\un{b}x_0^\infty)|+\var (r)}.
\end{eqnarray*}
Thus, for a global constant $D'$ independent of $x,\un{a}$, and $\un{b}$,
$$
\frac{(\ell\circ\kappa)[K_{x'_{-n},\ldots,x'_{-1}}]}{\ell[K_{x'_{-n},\ldots,x'_{-1}}]}\leq D'\exp[|R(\un{a}x_0^\infty,\un{b}x_0^\infty)|].
$$
Since this estimate holds for all $n>|\un{a}|$, and since $\{K_{x_{-n}',\ldots,x_{-1}'}\}$ generate the Borel sigma--algebra of $A$, we get that $\ell\circ\kappa|_A\ll\ell$,  and  $d\ell\circ\kappa/d\ell\leq D'\exp[|R(\un{a}x_0^\infty,\un{b}x_0^\infty)|]$.
The lower bound on the derivative is  obtained in exactly the same manner, as do  the estimates for the distortion of $\kappa$ on $B$.
\end{proof}

We turn to discuss a different class of maps between local symbolic manifolds.
Fix two admissible sequences  $x_0^\infty$, $y_0^\infty$ and define
$$
\vartheta^\ast:\{w\in\Sigma: w_0^\infty=x_0^\infty\}\to \{w\in\Sigma: w_0^\infty=y_0^\infty\}
$$
by $\vartheta^\ast(w)=(w_{-\infty}^{-1},\un{w}_{w_{-1}y_0},\dot{y}_0^\infty)$, where
$\un{w}_{w_{-1}y_0}$ is a bridge word out of the (fixed) collection $\mathfrak B$, and
where the zero coordinate is the first symbol of the dotted word.

This induces the map from $W^{ss}_{\loc}(\pi(x,\xi,s+h(x))$ into $W^{ss}_{\loc}(\pi(y,\xi,s+h(y))$
$$
\vartheta[\pi(w,\xi,s+h(w))]=\pi(\vartheta^\ast(w),\xi, s+h(\vartheta^\ast(w))).
$$
 As before,  $\vartheta$ is well--defined and one--to--one on $W^{ss}_{\loc}(\pi(x,\xi,s+h(x)))$ minus a countable set. But we can not  claim it is surjective (because of the insertion of a fixed bridge word).

\begin{lem}\label{basichol2}
$\vartheta:W^{ss}_{\loc}(\pi(x,\xi,s+h(x)))\to W^{ss}_{\loc}(\pi(y,\xi,s+h(y)))$ is absolutely continuous with respect to the hyperbolic length measure $\ell$, and there is some constant $D''>1$, independent of $x_0^\infty$ and $y_0^\infty$ such that
$
{D''}^{-1}\leq {d\ell\circ\vartheta}/{d\ell}\leq D''$.
\end{lem}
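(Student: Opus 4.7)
The plan is to adapt verbatim the cylinder-by-cylinder computation of Lemma \ref{basichol1}, the key new feature being that the inserted bridge word has the fixed length $M_{br}$ and that $r$ is bounded, so no analogue of the $|R^+(\un{a}x_0^\infty,\un{b}x_0^\infty)|$ factor appears. First I would partition $W^{ss}_{\loc}(\pi(x,\xi,s+h(x)))$ according to the value $a=w_{-1}$; on each such piece the bridge word $\un{w}_{ay_0}\in\mathfrak B$ is a fixed word of length $M_{br}$. I would then choose an admissible word $(x'_{-n},\ldots,x'_{-1})$ with $x'_{-1}=a$ and consider the cylinder
$$K_{x'_{-n},\ldots,x'_{-1}}=\{\pi(w,\xi,s+h(w)):w_{-n}^{-1}=(x'_{-n},\ldots,x'_{-1}),\ w_0^\infty=x_0^\infty\},$$
whose image under $\vartheta$ is the corresponding cylinder on $W^{ss}_{\loc}(\pi(y,\xi,s+h(y)))$ determined by the word $(x'_{-n},\ldots,x'_{-1},\un{w}_{ay_0},y_0^\infty)$.

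Next, I would apply the length formula already derived in the proof of Lemma \ref{basichol1}, namely $\ell[K_{x'_{-n},\ldots,x'_{-1}}]=e^{-r_n(x'_{-n},\ldots,x'_{-1},x_0^\infty)}\psi(x'_{-n},\ldots,x'_{-1},x_0^\infty)$ and its analogue for the image, obtaining
$$\frac{\ell[\vartheta(K_{x'_{-n},\ldots,x'_{-1}})]}{\ell[K_{x'_{-n},\ldots,x'_{-1}}]}=\exp\bigl[r_n(x'_{-n},\ldots,x'_{-1},x_0^\infty)-r_{n+M_{br}}(x'_{-n},\ldots,x'_{-1},\un{w}_{ay_0},y_0^\infty)\bigr]\cdot\frac{\psi(\textrm{new})}{\psi(\textrm{old})}.$$
The point is that both the exponent and the $\psi$-ratio are uniformly bounded. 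For $0\leq k<n$ the sequences $\s^k(\textrm{new})$ and $\s^k(\textrm{old})$ agree on their first $n-k$ coordinates, so H\"older continuity of $r$ gives $|r(\s^k\textrm{new})-r(\s^k\textrm{old})|\leq \sup\{|r(u)-r(v)|:u_0^{n-k-1}=v_0^{n-k-1}\}$, and summing over $k$ yields a contribution bounded by $\var(r)<\infty$. The remaining $M_{br}$ extra summands of $r_{n+M_{br}}(\textrm{new})$ contribute at most $M_{br}\|r\|_\infty$. The $\psi$-ratio is trapped in $[\min\psi/\max\psi,\max\psi/\min\psi]$ by positivity of $\psi$. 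Combining these bounds produces a single constant $D''$ depending only on $M_{br}$, $\var(r)$, $\|r\|_\infty$, $\min\psi$, $\max\psi$, independent of $x$, $y$, $a$, $n$ and the word $(x'_{-n},\ldots,x'_{-1})$.

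To conclude, since the cylinders $K_{x'_{-n},\ldots,x'_{-1}}$ generate the Borel $\sigma$-algebra on each piece of the partition, the two-sided pointwise bound on cylinder ratios upgrades to absolute continuity $\ell\circ\vartheta\ll\ell$ together with $(D'')^{-1}\leq d\ell\circ\vartheta/d\ell\leq D''$. The lower bound is obtained by symmetry (the same calculation applied to cylinders downstairs). I do not expect any real obstacle; the only bookkeeping point to watch is that $\vartheta$ is not onto, so the Radon-Nikodym derivative is to be understood as referring to the image cylinders, and the countable set of ambiguous $\pi$-preimages (Lemma \ref{bowen} part (6)) is harmless for the length measure.
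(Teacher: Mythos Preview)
Your proposal is correct and follows essentially the same route as the paper: compute the $\ell$--measure of a generating family of cylinders $K_{x'_{-n},\ldots,x'_{-1}}$ via the formula $\ell[K]=e^{-r_n(\cdot)}\psi(\cdot)$ from Lemma~\ref{basichol1}, take the ratio, and bound the exponent by $\var(r)$ (for the overlapping $n$ summands) plus the contribution of the $M_{br}$ extra bridge terms, together with $\max\psi/\min\psi$. The paper writes the bridge contribution as $\max_{\un{w}\in\mathfrak B}\exp\max_{x\in[\un{w}]}|r_{M_{br}}(x)|$ rather than your $M_{br}\|r\|_\infty$, but these are equivalent; your explicit partition according to $a=w_{-1}$ is a minor organizational difference only.
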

\begin{proof}
Fix $n$ and a word $(w_{-n},\ldots,w_{-1})$ such that $[w_{-1},x_0]\neq\emptyset$,  and set
$$
K_{w_{-n},\ldots,w_{-1}}:=\{
\pi(w',\xi,s+h(w')):{(w')}_{-n}^\infty=(w_{-n}^{-1},x_0^\infty)\}.
$$
The same calculation as in the previous lemma shows that
\begin{eqnarray*}
\frac{(\ell\circ\vartheta)[K_{w_{-n},\ldots,w_{-1}}]}{\ell[K_{w_{-n},\ldots,w_{-1}}]}&=&\!\!\!\!\frac{\exp[r_n(w_{-n},\ldots w_{-1},x_0^\infty)]}{\exp[r_{n+|\un{w}_{w_{-1}y_0}|}(w_{-n},\ldots w_{-1},\un{w}_{w_{-1}y_0},y_0^\infty)]}\\
&&\hspace{3.5cm}\x\frac{\psi(w_{-n},\ldots w_{-1},\un{w}_{w_{-1}y_0},y_0^\infty)}{\psi(w_{-n},\ldots,w_{-1}x_0^\infty)}\\
&\leq &\!\!\!\! \left(\frac{\max\psi}{\min\psi}\right)\x\exp[\var(r)]\x \max_{\un{w}\in\mathfrak B}\left[\exp\max_{x\in[\un{w}]}|r_{M_{br}}(x)|\right].
\end{eqnarray*}
As in the proof of the previous lemma, this implies that  for some constant $D''_1>1$, independent of $x$ and $y$,
 $d\ell\circ\vartheta/d\ell\leq D''_1$.
A uniform lower bound $D''_2$ can be obtained in exactly the same manner.
\end{proof}

\subsection*{Preparations II: Permuting Points on a Horocycle}
We use the transformations discussed above to  construct certain maps which `permute' points on a given horocycle.
These bijections are defined symbolically in terms of the coding $\pi(x,\xi,s)$. They take the form
\begin{eqnarray*}
\theta[\pi(x,\xi,s+h(x))]&=&\pi\bigl(\theta^\ast(x),\xi+F(x,\theta^\ast(x)),s+h(x)+R(x,\theta^\ast(x))\bigr)\\
&=&\pi\bigl(\theta^\ast(x),\xi+F(x,\theta^\ast(x)),s+h(\theta^\ast x)+R^+(x,\theta^\ast(x))\bigr)
\end{eqnarray*}
 where $\theta^\ast$ will be defined below,  and   $F(\cdot,\cdot), R(\cdot,\cdot), R^+(\cdot,\cdot)$, given by (\ref{cocycles}), are designed to ensure that $\theta[\pi(x,\xi,s)]$ stays on the horocycle of $\pi(x,\xi,s)$. Definitions of this form make sense everywhere on the horocycle where $\pi$ is injective, therefore everywhere except a countable set.

The maps $\theta^\ast(x)$ we need fall into two groups (precise definitions and explanation of notation follow):
\begin{enumerate}
\item maps $\theta^\ast=\kappa_{n,N}^\ast$ which exchange the positions of two $N$--blocks in $x$,
\item maps $\theta^\ast=\vartheta_{T^\ast,\om_1,\om_2}^\ast$ which exchange some suffix of $x$ by another suffix.
\end{enumerate}

\subsubsection*{Exchanging blocks: $\kappa_{n,N}$}
Fix $n, N\in \N$. Define $\kappa_{n,N}^\ast$ on $\Sigma$ by exchanging
the places of the $N$--blocks $(x)_0^{N-1}$, $(x)_{nN}^{nN+N-1}$,
and then inserting bridge words (see \S\ref{symbsection})  at the right places to ensure admissibility:
\begin{multline*}
\kappa_{n,N}^{\ast}(x):=(x_{-\infty}^{-1},w_{x_{-1}x_{nN}},\textrm{\fbox{$\dot{x}_{nN}^{nN+N-1}$}},
w_{x_{nN+N-1},x_{N}},\\
x_{N}^{nN-1},w_{x_{nN-1},x_0},\textrm{\fbox{$x_0^{N-1}$}},w_{x_{N-1},x_{(n+1)N}},x_{(n+1)N}^\infty).
\end{multline*}
The zero coordinate is at the first symbol of the dotted word, and the boxed blocks are those that were switched.
This gives rise to the following self--map of the horocycle $\Hor(\pi(x,s,h(x))$:
$$
\kappa_{n,N}\bigl(\pi(x,\un{\xi},s+h(x))\bigr):=\pi\bigl(\kappa_{n,N}^{\ast}x,\xi+F(x,\kappa_{n,N}^{\ast}x),
s+h(\kappa_{n,N}^{\ast}x)+R^+(x,\kappa_{n,N}^{\ast}x)\bigr).
$$

\begin{lem}\label{properties_of_kappa_nN} The map $\kappa_{n,N}$  is  absolutely continuous and injective on a subset of full $\ell$--measure of  ${\Hor}(\pi(x,\xi,s+h(x)))$, and there exist  constants $C'_r\ge 1$, $E'$, and $L'$ independent of $n,N$ and $x$ such that
\begin{enumerate}
\item $(C_r')^{-1}\leq d\ell\circ\kappa_{n,N}/d\ell\leq C_r'$,
\item the $\Sigma$--coordinate of $\kappa_{n,N}(\pi(x,\xi,t))$ is $\s^{l}(\kappa_{n,N}^\ast(x))$ for some $|l|\leq L'$,
\item $\dist(\om,\kappa_{n,N}(\om))\leq E'$ $(\dist=\dist_{T^1 M})$.
\end{enumerate}
\end{lem}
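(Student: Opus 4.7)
The plan is to reduce all three claims to a single estimate: the cocycles $R^+(x,\kappa_{n,N}^\ast(x))$ and $F(x,\kappa_{n,N}^\ast(x))$ from (\ref{cocycles}) are bounded uniformly in $n$, $N$, and $x$, even though the two exchanged blocks are separated by the unbounded distance $(n-1)N$. This is not automatic: Lemma \ref{basichol1} gives a distortion bound involving $\exp|R^+|$, so uniformly bounding $R^+$ itself is essential.

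For part (1), observe that for $w\in W^{ss}_{\loc}(\om)$ the map $\kappa_{n,N}^\ast$ acts on $w_0^\infty$ in exactly the same way as on $x_0^\infty$, so $\kappa_{n,N}$ sends cylinders in $W^{ss}_{\loc}(\om)$ to cylinders in $W^{ss}_{\loc}(\kappa_{n,N}(\om))$. The computation in the proof of Lemma \ref{basichol1} then expresses $(\ell\circ\kappa_{n,N})[K]/\ell[K]$ on a small cylinder $K$ as $\psi(\kappa_{n,N}^\ast x)/\psi(x)$ times $\exp[R^+(x,\kappa_{n,N}^\ast x)]$, up to a bounded factor from the three inserted bridge words. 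The $\psi$-ratio is controlled by $\max\psi/\min\psi$. To bound $R^+ = r_p(x)-r_q(\kappa_{n,N}^\ast x)$ with $p=(n+1)N$ and $q=(n+1)N+3M_{br}$, I pair positions within each of the three ``main'' blocks $x_0^{N-1}$, $x_N^{nN-1}$, and $x_{nN}^{nN+N-1}$. At corresponding positions $k$ in $x$ and $k'$ in $\kappa_{n,N}^\ast x$, the shifted sequences $\sigma^k x$ and $\sigma^{k'}\kappa_{n,N}^\ast x$ agree on $d$ initial symbols, where $d$ is the distance from the paired position to the right end of the block; the H\"older continuity of $r$ then gives $|r(\sigma^k x)-r(\sigma^{k'}\kappa_{n,N}^\ast x)|\le C\theta^d$. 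Summing the geometric series yields a bounded contribution per block \emph{independently of the block length}, and the $3M_{br}$ positions inside the bridge words contribute at most $3M_{br}\|r\|_\infty$.

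For part (2), the symbolic formula for $\kappa_{n,N}$ may produce an $\R$-coordinate $s+h(\kappa_{n,N}^\ast x)+R^+$ outside the canonical interval $[0,r^\ast(\kappa_{n,N}^\ast x))$. I bring it back by iterating $\pi\circ T_{(f,-r^\ast)}=\pi$ from Lemma \ref{bowen} part (4), each iteration shifting the $\Sigma$-coordinate by $\pm 1$. Since $|s|$, $|h|$, and $|R^+|$ are uniformly bounded while $\inf r^\ast_{n_0}>0$ by Lemma \ref{bowen} part (1), only a bounded number $|l|\le L'$ of iterations is needed. For part (3), the canonical symbolic coordinates of $\kappa_{n,N}(\om)$ then take the form $(\sigma^l\kappa_{n,N}^\ast x,\,F-f_l,\,t')$ with $t'\in[0,r^\ast)$ and $\xi(\om)=0$. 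The same block-pairing strategy bounds $|F|$: since $f$ depends only on two consecutive coordinates, the internal contributions of each block to $f_p(x)$ and $f_q(\kappa_{n,N}^\ast x)$ are \emph{identical}, so only $O(1)$ transition terms and $O(M_{br})$ bridge terms contribute to the difference. Combined with $|l|\le L'$ bounding $|f_l|$ and $|r^\ast_l|$, the canonical symbolic coordinates of $\kappa_{n,N}(\om)$ lie in a uniformly bounded region of $\Sigma\x\Z^d\x\R$, so $\pi$ places $\kappa_{n,N}(\om)$ in a bounded hyperbolic neighborhood of $\wt{M}_0$, yielding the desired constant $E'$.

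The main obstacle is precisely the block-pairing bound on $R^+$ and $F$: a naive application of Lemma \ref{basichol1} would give a distortion bound growing with $n$, and it is the H\"older continuity of $r$ together with the fact that $f$ depends on only two consecutive coordinates that allow the crucial cancellation across the long intermediate $(n-1)N$-block. Everything else in the proof is a bookkeeping consequence of this estimate.
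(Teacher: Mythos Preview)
Your proposal is correct and follows essentially the same route as the paper: view $\kappa_{n,N}^\ast$ locally as exchanging the word $\un{a}=x_0^{(n+1)N-1}$ for a rearranged word $\un{b}$ with bridge words inserted, invoke Lemma~\ref{basichol1} for the distortion bound, and reduce everything to a uniform bound on $|R^+|$ and $\|F\|$. The paper simply asserts these bounds as ``a direct calculation using the boundedness and H\"older continuity of $r$'' and ``routine'', whereas you spell out the block-pairing argument explicitly (note that the paper counts four bridge words in $\un{b}$ rather than three, since $\un{b}$ begins with $w_{x_{-1}x_{nN}}$ at negative positions, but this is cosmetic).
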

\begin{proof}
The map $\kappa^\ast_{n,N}$ can be thought of locally as the map which exchanges the word $\un{a}$ by $\un{b}$, where
\begin{eqnarray*}
\un{a}\!\!\!&=&\!\!\!(x_0,\ldots,x_{(n+1)N-1})\\
\un{b}\!\!\!&=&\!\!\!(w_{x_{-1}x_{Nn}},{x}_{Nn}^{(n+1)N-1},
w_{x_{N(n+1)-1},x_{N}},
x_{N}^{Nn-1},w_{x_{Nn-1},x_0},x_0^{N-1},
w_{x_{N-1},x_{(n+1)N}}).
\end{eqnarray*}
Note that $|\un{a}|=(n+1)N$, and $|\un{b}|=(n+1)N+4M_{br}$, where $M_{br}$ is the  length of the bridge words in $\mathfrak B$.

The distortion of such maps was found in Lemma \ref{basichol1}.
 A direct calculation using the boundedness and H\"older continuity of $r$ shows  that there exist a constant $E_R'$, which only depends on $\max r$, $M_{br}$ and $\var (r)$ (but not on $N,n$), such that
$|R^+(\un{a}x_{(n+1)N}^\infty,\un{b}x_{(n+1)N}^\infty)|<E_R'$.
Part (1) follows from lemma \ref{basichol1}.

Next we study the $\Sigma$--coordinate of $\kappa_{n,N}(\pi(x,\xi,s+h(x)))$. This must be  $\s^{l}(\kappa^\ast_{n,N}x)$ with $l$ such that either $l\geq 0$ and
$$
0\leq s+h(\kappa^\ast_{n,N}x)+R^+(x,\kappa^\ast_{n,N}x)-r^\ast_l(\s^l\kappa^\ast_{n,N}x)< r^\ast(\s^l \kappa^\ast_{n,N}x).
$$
or $l<0$ and
$
0\leq s+h(\kappa^\ast_{n,N}x)+R^+(x,\kappa^\ast_{n,N}x)+r^\ast_{-l}(\s^{l}\kappa^\ast_{n,N}x)< r^\ast(\s^l \kappa^\ast_{n,N}x)$.
We have seen that $|R^+(x,\kappa_{n,N}^\ast x)|\leq E_R'$. Since $h$ is  bounded and
$\inf r^\ast_{n_0}>0$, $l$ must be bounded by some constant $L'=L'(\max |h|,E_r',\min r^\ast_{n_0})$. Part (2) follows.

Finally we study the $\Z^d$--coordinate of $\om':=\kappa_{n,N}(\pi(x,\xi,s+h(x)))$. If  $l$ is as above, then the symbolic coordinates of $\om'$ are (assuming for simplicity that $l>0$)
$$
(\s^l(\kappa_{n,N}^\ast(x)),\xi+F(\un{a}x_{(n+1)N}^\infty,\un{b}x_{(n+1)N}^\infty)+f_l(\kappa_{n,N}^\ast(x)), \textrm{something}).
$$
It is routine to check that  $\|F(\un{a}x,\un{b}x)\|$ is uniformly bounded by some constant $E_F'$ which only depends on $\max\|f\|$ and $M_{br}$.
This means that the difference in $\Z^d$--coordinates is no more than $\|F(\un{a}x_{(n+1)N}^\infty,\un{b}x_{(n+1)N}^\infty)+f_l(\kappa_{n,N}^\ast(x))\|\leq E_F'+L'\max\|f\|.$

Thus $\om, \om'$ lie in two copies of $\wt{M}_0\subset T^1 M$ which differ by a deck transformation whose $\Z^d$--index is bounded.
Since the diameter of $\wt{M}_0$ is finite, this implies that  $\dist(\om,\om')$ is uniformly bounded, whence part (3).
\end{proof}

\subsubsection*{Changing suffices: $\vartheta_{T^\ast,\om_1,\om_2}$}
Fix $T^\ast>0$ and two vectors $\om_1$,$\om_2$ on the same horocycle $\Hor(\om)$. We define a map $\vartheta_{T^\ast,\om_1,\om_2}:g^{-T^\ast}[W^{ss}_{\loc}(g^{T^\ast}\om_1)]\to g^{-T^\ast}[W^{ss}_{\loc}(g^{T^\ast}\om_2)]$ as follows: write $\om_i^\ast:=g^{T^\ast}(\om_i)=\pi(x_i^\ast,\xi_i^\ast,s_i^\ast+h(x_i^\ast))$ $(i=1,2)$, and define
\begin{enumerate}
\item $\vartheta^\ast_{x_1^\ast,x_2^\ast}:\{z^\ast\in\Sigma: (z^\ast)_0^\infty=(x_1^\ast)_0^\infty\}\to \{x^\ast\in\Sigma: (x^\ast)_0^\infty=(x_2^\ast)_0^\infty\}$ by
$$
\vartheta^\ast_{x_1^\ast,x_2^\ast}(z)=\bigl(z_{-\infty}^{-1},w_{z_{-1},({x}_2^\ast)_0},(\dot{x}_2^\ast)_0^\infty\bigr);
$$
\item $\wt{\vartheta}_{\om_1,\om_2}: W^{ss}_{\loc}(g^{T^\ast}\om_1)\to W^{ss}_{\loc}(g^{T^\ast}\om_2)$ by
$$
\wt{\vartheta}_{\om_1,\om_2}(\pi(z,\xi_1^\ast,s_1^\ast+h(z)))=\pi(\vartheta^\ast_{x_1^\ast,x_2^\ast}z,\xi_2^\ast,s_2^\ast+h(\vartheta^\ast_{x_1^\ast,x_2^\ast}z));
$$
\item The map $\vartheta_{T^\ast,\om_1,\om_2}:g^{-T^\ast}[W^{ss}_{\loc}(g^{T^\ast}\om_1)]\to g^{-T^\ast}[W^{ss}_{\loc}(g^{T^\ast}\om_2)]$ is
$$
\vartheta_{T^\ast,\om_1,\om_2}:=g^{-T^\ast}\circ \wt{\vartheta}_{\om_1,\om_2}\circ g^{T^\ast}.
$$
\end{enumerate}
\begin{lem}\label{properties_of_vartheta}  Suppose $\dist(g^{T^\ast}\om_1,g^{T^\ast}\om_2)\leq T_0$. The map $\vartheta_{T^\ast,\om_1,\om_2}$  is  an absolutely continuous injective map from a subset of full $\ell$--measure of $g^{-T^\ast}[W^{ss}_{\loc}(g^{T^\ast}\om_1)]$ into
$g^{-T^\ast}[W^{ss}_{\loc}(g^{T^\ast}\om_2)]$, and there exist  constants $C''_r\ge 1$, $E''$ and $L''$ which only depend on $T_0$ such that
\begin{enumerate}
\item $(C_r'')^{-1}\leq d\ell\circ\vartheta_{T^\ast,\om_1,\om_2}/d\ell\leq C_r''$,
\item the $\Sigma$--coordinate of $\vartheta_{T^\ast,\om_1,\om_2}(\pi(x,\xi,T))$ is $\s^{-m}(\vartheta^\ast_{x_1^\ast,x_2^\ast}\s^n x)$ with  $n\geq 0$ such that $|r_n^\ast(x)-T^\ast|\leq 5(\max r^\ast+\max|h|)$ and $|m-n|\leq L''$,
\item $\dist(\om,\vartheta_{T^\ast,\om_1,\om_2}\om)\leq E''$ $(\dist=\dist_{T^1 M})$.
\end{enumerate}
\end{lem}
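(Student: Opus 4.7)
The proof follows the pattern of Lemma \ref{properties_of_kappa_nN}, with Lemma \ref{basichol2} playing the role that Lemma \ref{basichol1} played there. The plan is to decompose
$$\vartheta_{T^\ast,\om_1,\om_2}=g^{-T^\ast}\circ\widetilde{\vartheta}_{\om_1,\om_2}\circ g^{T^\ast}$$
and track, factor by factor, the effect on the length measure, the $\Sigma$-coordinate, and the $\Z^d$-coordinate of a symbolic point. For part (1), the geodesic flow scales horocycle length by $\ell\circ g^s=e^{-s}\ell$, so the factors $e^{-T^\ast}$ and $e^{T^\ast}$ contributed by $g^{T^\ast}$ and $g^{-T^\ast}$ cancel pointwise. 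Hence $d\ell\circ\vartheta_{T^\ast,\om_1,\om_2}/d\ell$ agrees at corresponding points with $d\ell\circ\widetilde{\vartheta}_{\om_1,\om_2}/d\ell$, and since $\widetilde{\vartheta}_{\om_1,\om_2}$ is (up to a $\Z^d$-translation, which preserves $\ell$) exactly the map $\vartheta$ of Lemma \ref{basichol2} applied with $x=x_1^\ast$, $y=x_2^\ast$, the bound follows with $C''_r=D''$, in fact independent of $T_0$ altogether.

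For part (2), write $\om=\pi(x,\xi,T)$ with $T\in[0,r^\ast(x))$. Iterating $\pi\circ T_{(f,-r^\ast)}=\pi$, the point $g^{T^\ast}\om=\pi(x,\xi,T+T^\ast)$ is placed in standard form as $\pi(\s^n x,\xi+f_n(x),T+T^\ast-r^\ast_n(x))$, where $n\geq 0$ is the unique integer for which the third coordinate lies in $[0,r^\ast(\s^n x))$. This immediately gives $|r^\ast_n(x)-T^\ast|\leq r^\ast(\s^n x)+T\leq 2\max r^\ast+2\max|h|$. Next, $\widetilde{\vartheta}_{\om_1,\om_2}$ replaces $\s^n x$ by $\vartheta^\ast_{x_1^\ast,x_2^\ast}(\s^n x)$ and sends the $\Z^d$-coordinate from $\xi_1^\ast$ to $\xi_2^\ast$, while $g^{-T^\ast}$ subtracts $T^\ast$ from the third coordinate. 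Invoking the backward orbit relation to bring the third coordinate into $[0,r^\ast)$ shifts the $\Sigma$-coordinate by $\s^{-m}$ for some $m\geq 0$, determined by the requirement $r^\ast_m(\s^{-m}\vartheta^\ast_{x_1^\ast,x_2^\ast}\s^n x)=T^\ast+O(\max r^\ast+\max|h|)$. Since both $r^\ast_n(x)$ and $r^\ast_m(\s^{-m}\vartheta^\ast\s^n x)$ equal $T^\ast+O(1)$, and $\inf r^\ast_{n_0}>0$ (Lemma \ref{bowen}(1)) controls the conversion from $r^\ast$-sums to block counts, one obtains $|m-n|\leq L''$ for $L''$ depending only on $\max r^\ast$, $\max|h|$, $\min r^\ast_{n_0}$, and $M_{br}$.

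For part (3), compute the $\Z^d$-coordinate of $\vartheta_{T^\ast,\om_1,\om_2}(\om)$: from the tracking above it equals $\xi_2^\ast-f_m(\s^{-m}\vartheta^\ast_{x_1^\ast,x_2^\ast}\s^n x)$, while that of $\om$ equals $\xi=\xi_1^\ast-f_n(x)$. Their difference is therefore $(\xi_2^\ast-\xi_1^\ast)+(f_n(x)-f_m(\s^{-m}\vartheta^\ast_{x_1^\ast,x_2^\ast}\s^n x))$. The first term is bounded in norm by a function of $T_0$ alone, since in a $\Z^d$-cover of a compact surface the $\Z^d$-coordinate is a proper function of the ambient distance. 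The second term is bounded by $(|m-n|+M_{br}+2)\max\|f\|$, because $\vartheta^\ast$ preserves the past, so $\s^{-m}\vartheta^\ast_{x_1^\ast,x_2^\ast}\s^n x$ agrees with $x$ on an initial block of length at least $\min(n,m)-M_{br}$, leaving the contributions to $f_n(x)$ and $f_m(\cdot)$ to differ in only $O(|m-n|+M_{br})$ terms. Thus $\om$ and $\vartheta_{T^\ast,\om_1,\om_2}(\om)$ lie in copies of $\wt{M}_0$ differing by a deck transformation of uniformly bounded $\Z^d$-index; since $\wt{M}_0$ has finite diameter in $T^1 M$, this gives $\dist_{T^1 M}(\om,\vartheta_{T^\ast,\om_1,\om_2}(\om))\leq E''$ for some $E''$ depending only on $T_0$.

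The main technical obstacle is establishing $|m-n|\leq L''$ uniformly in $T^\ast$. This requires coupling the symbolic times $n$ and $m$ through the common physical time $T^\ast$, using $\inf r^\ast_{n_0}>0$ to convert $r^\ast$-sums into block counts, and absorbing the bounded perturbation introduced by the $M_{br}$-letter bridge word. A secondary subtlety is justifying that bounded $\dist_{T^1 M}$-displacement forces bounded $\Z^d$-displacement between fundamental domains, which follows from the discreteness of the deck action and the compactness of the base $M_0$.
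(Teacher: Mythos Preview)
Your approach is the same as the paper's: decompose $\vartheta_{T^\ast,\om_1,\om_2}=g^{-T^\ast}\circ\wt{\vartheta}_{\om_1,\om_2}\circ g^{T^\ast}$, invoke Lemma~\ref{basichol2} for the distortion, track the symbolic coordinates through each factor, and bound the $\Z^d$--displacement using $\dist(\om_1^\ast,\om_2^\ast)\leq T_0$. Parts (1) and (3) are fine.

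There is a gap in your argument for $|m-n|\leq L''$ in part (2). You write that since both $r^\ast_n(x)$ and $r^\ast_m(\s^{-m}\vartheta^\ast\s^n x)$ equal $T^\ast+O(1)$, the bound $\inf r^\ast_{n_0}>0$ lets you ``convert $r^\ast$-sums into block counts'' and conclude $|m-n|\leq L''$. But for two \emph{unrelated} sequences $x$ and $y$, the equality $r^\ast_n(x)=T^\ast+O(1)=r^\ast_m(y)$ only pins down $n$ and $m$ to intervals of width proportional to $T^\ast$ (between roughly $T^\ast/\max r^\ast$ and $n_0T^\ast/\min r^\ast_{n_0}$), so $|m-n|$ could be of order $T^\ast$. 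What actually forces $|m-n|$ to be bounded is the structural fact you state only later, in part (3): since $\vartheta^\ast$ preserves the past, one has
\[
(\s^{-m}\vartheta^\ast_{x_1^\ast,x_2^\ast}\s^n x)_0^{m-M_{br}-1}=x_{\,n-m+M_{br}}^{\,n-1},
\]
so $r_m(\s^{-m}\vartheta^\ast\s^n x)$ and $r_n(x)$ share the Birkhoff-sum contributions over this common block. Their difference is therefore $r_{|n-m+M_{br}|}$ of some point, up to errors of size $O(M_{br}\max|r|+\var(r)+\max|h|)$; and it is \emph{this} quantity that one bounds below by $\lfloor|n-m|/n_0\rfloor\min r^\ast_{n_0}-\const$ to force $|m-n|$ bounded. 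The paper carries this out explicitly via its formula (\ref{m-n}) and the chain of inequalities following it. You have all the ingredients, but the coordinate-sharing observation must be invoked in part (2), not deferred to part (3); the ``coupling through the common physical time $T^\ast$'' alone is insufficient.
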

\begin{proof}
Lemma \ref{basichol2} implies that $\wt{\vartheta}_{\om_1,\om_2}$ is absolutely continuous and that
$$
\frac{1}{D''}\leq \frac{d\ell\circ\wt{\vartheta}_{\om_1,\om_2}}{d\ell}\leq D''
$$
with $D''$ independent of $\om_i$. Since the geodesic flow contracts the length of horocycles uniformly, part (1) follows with $C_r'':=D''$.

To compare the symbolic coordinates of $\om$ and $\vartheta_{T^\ast,\om_1,\om_2}(\om)$, write $\om=\pi(x,\xi,t)$ with $0\leq t<r^\ast(x)$.
Since $\om\in g^{-T^\ast}W^{ss}_{\loc}(\om_1^\ast)$, there is some $n$ s.t.
\begin{eqnarray*}
g^{T^\ast}(\om)&=&\pi(\s^n(x),\xi+f_n(x),t+T^\ast-r_n^\ast(x))
\end{eqnarray*}
where $\s^n(x)_0^\infty=(x_1^\ast)_0^\infty$,
$\xi+f_n(x)=\xi_1^\ast$,
$t+T^\ast-r_n^\ast(x)=s_1^\ast+h(\s^n(x))$.
Thus
\begin{eqnarray*}
\vartheta_{T^\ast,\om_1,\om_2}(\om)&=&
g^{-T^\ast}\pi\biggl(\vartheta^\ast_{x_1^\ast,x_2^\ast}(\s^n x),\xi_2^\ast, s_2^\ast+h\bigl(\vartheta^\ast_{x_1^\ast,x_2^\ast}(\s^n x)\bigr)\biggr)\\
&=&\pi\left(
\begin{array}{c}
\s^{-m}\vartheta^\ast_{x_1^\ast,x_2^\ast}(\s^n x)\\
\xi_2^\ast-f_m(\s^{-m}\vartheta^\ast_{x_1^\ast,x_2^\ast}(\s^n x))\\
 s_2^\ast+h(\vartheta^\ast_{x_1^\ast,x_2^\ast}(\s^n x))-T^\ast+r_m^\ast(\s^{-m}\vartheta^\ast_{x_1^\ast,x_2^\ast}(\s^n x))
\end{array}
\right)
\end{eqnarray*}
with $m$ s.t.
$$
0\leq  s_2^\ast+h(\vartheta^\ast_{x_1^\ast,x_2^\ast}(\s^n x))-T^\ast+r_m^\ast(\s^{-m}\vartheta^\ast_{x_1^\ast,x_2^\ast}(\s^n x))<r^\ast(\s^{-m}\vartheta^\ast_{x_1^\ast,x_2^\ast}(\s^n x)).
$$

 The obvious bounds $|s_2^\ast|\leq \max r^\ast+\max|h|$, $|r^\ast_m-r_m|\leq 2\max|h|$, and the double inequality above  imply that
$$
|r_m(\s^{-m}\vartheta^\ast_{x_1^\ast,x_2^\ast}(\s^n x))-T^\ast|\leq 5(\max r^\ast+\max|h|).
$$
The identity $t+T^\ast-r_n^\ast(x)=s_1^\ast+h(\s^n(x))$ means that
$$
|r_n^\ast(x)-T^\ast|, |r_n(x)-T^\ast|\leq 5(\max r^\ast+\max|h|).
$$
We see that $|r_n(x)-r_m(\s^{-m}\vartheta^\ast_{x_1^\ast,x_2^\ast}(\s^n x))|\leq 10(\max r^\ast+\max|h|)$.
By construction,
\begin{equation}\label{m-n}
(\s^{-m}\vartheta^\ast_{x_1^\ast,x_2^\ast}(\s^n x))_0^{\infty}=(x_{n-m+M_{br}},\ldots,x_{n-1},{w}_{x_{n-1}(x^\ast_2)_0},(x_2^\ast)_0^\infty)
\end{equation}
This allows us to estimate $|m-n|$:
\begin{eqnarray*}
10(\max r^\ast+\max|h|)&\geq&
|r_n(x)-r_m(\s^{-m}\vartheta^\ast_{x_1^\ast,x_2^\ast}(\s^n x))|\\
&\geq &\inf |r_{|n-m+M_{br}|}|-2M_{br}\max |r|-2\var(r)\\
&\geq &\inf r_{|n-m+M_{br}|}^\ast-2\max|h|-2M_{br}\max |r|-2\var(r)\\
&\geq & \left\lfloor\frac{|n-m|}{n_0}\right\rfloor\inf r_{n_0}^\ast-2\max|h|-3M_{br}\max |r|-2\var(r).
\end{eqnarray*}
Since $\inf r^\ast_{n_0}>0$, we see that
$|m-n|\leq L''$, with $L''=L''(r,r^\ast,M_{br},h,n_0)$ with  $L''$  independent of $\om_1, \om_2$ and $T^\ast$.

We can now estimate the difference between the $\Z^d$ and $\R$ coordinates of  $\om=\pi(x,\xi,t)$ and $\om':=\vartheta_{T^\ast,\om_1,\om_2}(\om)$ which we write as $\om'=\pi(x',\xi',t')$.

We saw above  that $\xi=\xi_1^\ast-f_n(x)$. Comparing this to the  symbolic coordinates of $\om'$ found above, and recalling that $f(x)=f(x_0,x_1)$,   we see that
\begin{eqnarray*}
\|\xi'-\xi\|&\leq & \|\xi_1^\ast-\xi_2^\ast\|+\|f_m(x')-f_n(x)\|\\
&\leq & \|\xi_1^\ast-\xi_2^\ast\|+(L''+M_{br})\max\|f\|\ (\because (\ref{m-n})).
\end{eqnarray*}
Since by assumption $\dist(\om_1^\ast,\om_2^\ast)\leq T_0$, $\|\xi_1^\ast-\xi_2^\ast\|$ is bounded by a constant which only depends on $T_0$ and the geometry of $\wt{M}_0$. This implies that $\|\xi'-\xi\|$ is bounded above by some constant which only depends on $T_0, L''$ and $\max\|f\|$.

The upper bound on the difference between $\Z^d$--coordinates implies a uniform upper bound $E''$ on the distance between $\om$ and $\om'$ in $T^1 M$, whence part (3).
\end{proof}

\subsection*{Proof of the Key  Lemma}
Define once and for all the following constants:
\begin{itemize}
\item $\ell_{\min}:=\inf\limits_{\om'\in T^1 M}\ell(W^{ss}_{\loc}(\om'))=\min\psi$,   $d_{\max}:=\sup\limits_{\om'\in T^1 M} \diam[W^{ss}_{\loc}(\om')]$ (with the diameter measured using the intrinsic horocycle metric);
\item $T_0:=100d_{\max}$;
\item $C_r:=\max\{C_r', C_r''\} $, where $C_r',C_r''$ are as in lemmas \ref{properties_of_kappa_nN} and \ref{properties_of_vartheta} (with $T_0$ as above);
\item $L:=\max\{L', L''\}$ where $L', L''$ are as in lemmas \ref{properties_of_kappa_nN} and \ref{properties_of_vartheta} (with $T_0$ as above);
\item $E:=\max\{E', E''\}$ where $E', E''$ are as in lemmas \ref{properties_of_kappa_nN} and \ref{properties_of_vartheta} (with $T_0$ as above);
\item $K^\ast:=100L(\max r^\ast+\max |h|+\max |r|+\max\|f\|+\var(r))$, where $r,r^\ast,f$ are as in lemma \ref{bowen}, $L$ as in lemma \ref{basichol1};
\item Recall that  $\wt{M}_0\subset T^1 M$ is a fundamental domain for the action of the
covering group  on $T^1 M$. Fix $C=C(\wt{M}_0,E)$ so large that
$$
\dist(\om,\wt{M}_0)<10E\Longrightarrow \om\in\bigcup_{\|a\|<C}D_{\un{a}}(\wt{M}_0)=:\wt{M}_0(C),
$$
where $\{D_\xi:\xi\in\Z^d\}$ are the deck transformations of the cover.
\end{itemize}
Finally, fix some arbitrarily small $\e_0>0$.

\medskip
\noindent
{\em Step 1.\/} Application of Egoroff's theorem, and choice of $N(\e_0)$.

\medskip
\noindent
Recall that
$\frac{1}{T}\xi(g^T\om)\to \Xi_\vf$ as $T\to \infty$ $m_\vf$--almost everywhere.
Symbolically, this means that
$$
\frac{f_n(x)}{r_n^\ast(x)}\xrightarrow[n\to\infty]{}\Xi_\vf\textrm{ for $m_\vf$--a.e. $\om=\pi(x,\xi,t)$.}
$$
The reason is that this fraction is asymptotic to $\frac{1}{T_n}[\xi(g^{T_n}\om)-\xi(\om)]$,
where $T_n=$ is the $n$-th hitting time to the Poincar\'e section.

We construct a large set $\L$ where the limit is nearly achieved in finite time.
There exists an $N=N(\e_0)$ s.t.
$
m_\vf\{\om\in \wt{M}_0(C):\om=\pi(x,{\xi},s)\textrm{ , }
\bigl\|\frac{f_N(x)}{r_N^\ast(x)}-\Xi_\varphi\bigr\|\geq \e_0\}<\e_0
$.
Better yet, since $r_n^\ast\geq \lfloor n/n_0\rfloor\inf r_{n_0}^\ast$ tends to infinity uniformly and $f$ is bounded, we can take $N=N(\e_0,K^\ast)$ so large that  the set
$$
\Lambda(\e_0)\!:=\!\left\{\om\!=\!\pi(x,\xi,s)\!\in\! T^1M:\left\|\frac{f_N(x)+e_f}{r^\ast_N(x)+e_r}-\Xi_\vf\right\|\!<\!\e_0
\textrm{ for all }|e_r|,\|e_f\|\!<\!K^\ast\right\}
$$
satisfies
$
m_\vf[\Lambda(\e_0)\cap
\wt{M}_0(C)]>(1-\varepsilon_0)m_\vf[\wt{M}_0(C)]\mbox{ and }
m_\vf(\Lambda(\e_0)\cap
\wt{M}_0)>1-\varepsilon_0$.
This completes step 1.

\medskip
We know that $\L(\e_0)$ is large with respect to $m_\vf$. The next step is to study the size of the intersection of $\L(\e_0)$ with the horocycle of $\om_0$.  Recall from (\ref{lambda}) the definition of $A_T$ and $\l_T$, and define
$\l_T^C$ to be the  length measure on $A_T\cap\wt{M}_0(C)$, normalized s.t. $\l_T^C[\wt{M}_0]=1$:
\begin{eqnarray*}
\l_T^C&:=&\frac{1}{\int_0^T 1_{\wt{M}_0}(h^t(\om_0))dt}  \int_0^T
1_{\wt{M}_0(C)}(h^t\om_0)\d_{h^t(\om_0)}dt.
\end{eqnarray*}
Note that $\l_T$ is a probability measure, but $\l_T^C$ is not.

\medskip
\noindent
{\em Step 2.\/  If $\om_0$ is generic for $m_\vf$, then
there exists $\tau=\tau(\e_0)$ such that   for all $T\ge \tau$,  $\l_T(\wt{M}_0\setminus\Lambda(\e_0))\leq
\l_T^C(\wt{M}_0(C)\setminus\Lambda(\e_0))<2\e_0$.}

\medskip
\noindent
{\em Proof.\/}
Set $\L=\L(\e_0)$, $N=N(\e_0)$.
The discontinuities of the map
$\om\mapsto \frac{f_N(x(\om))}{r_N^\ast(x(\om))}$ are contained in the union of the  boundaries of all sets of the form
$
\pi\{(x,\xi,t):x\in [a_0,\ldots,a_{N-1}], \xi\in\Z^d, 0\leq t<r^\ast(x)\}$.
The Bowen--Series coding of \cite{Se1,Se2} has the property that these boundaries are geodesics arcs. Therefore, $m_\vf$ assigns to this set measure zero.

Thus for every $\e>0$,  we can construct sets $F_\e\subset U_\e\subset\L\cap \wt{M}_0(C)$ such that $F_\e$ is compact,  $U_\e$ is open, and $m_\vf[\L\cap\wt{M}_0(C)\setminus F_\e]<\e$.  Urysohn's lemma provides a continuous function  $0\leq \rho_\e(\cdot)\leq 1$ such that $\rho_\e=1$ on $F_\e$ and $\rho_\e=0$ outside $U_\e$ (whence outside $\L$). This function has compact support, and
$$
\int \rho_\e dm_\vf>m_\vf(\L\cap \wt{M}_0(C))-\e>1-(\e_0+\e).
$$

Next construct a continuous function with compact support $\theta_\e(\cdot)$ which approximates the indicator function of ${\wt{M}_0}$ from above in the sense that $0\leq \theta_\e\leq 1$, $\theta_\e=1$ on the closure of $\wt{M}_0$, and
$1<\int \theta_\e dm_\vf<1+\e$.

By construction, and since $\om_0$ is assumed to be $m_\vf$--generic,
$$
\l_T^C(\L)=\frac{\int_0^T 1_{\L\cap \wt{M}_0(C)}(h^t\om_0)dt}{\int_0^T 1_{\wt{M}_0}(h^t\om_0)dt}\geq \frac{\int_0^T \rho_\e(h^t\om_0)dt}{\int_0^T \theta_\e(h^t\om_0)dt}\xrightarrow[T\to\infty]{}\frac{\int\rho_\e dm_\vf }{\int\theta_\e dm_\vf }>\frac{1-(\e_0+\e)}{1+\e}.
$$
Thus there exists $\tau_\e$ such that for all $T>\tau_\e$, $\l_T^C(\L)>[1-(\e_0+\e)]/(1+\e)$. Choosing $\e$ sufficiently small,  we get that $\l_T^C(\L)>1-2\e_0$ for all $T>\tau_\e$ sufficiently large. Equivalently, $\l_T^C(\wt{M}_0(C)\setminus\L)<2\e_0$ for all $T>\tau_\e$. Note that $\tau_\e$ is a function of $\e_0$. This finishes the proof of the second step.

\medskip
The key lemma calls for the estimation of $\l_T(\wt{M}_0\cap \{r^\ast_{nN}<\ln T^\ast-K\}\setminus\L_n)$ for suitable choice of $K=K(\e_0)$, where here and throughout
$N=N(\e_0), \L=\L(\e_0)$, $\L_n=\L_n(N(\e_0),\e_0)$,  $T^\ast:=\ln(T/T_0)$ (with $T_0$ as above), and
$$
\{r^\ast_{nN}<T^\ast-K\}:=\{\om\in T^1M:r^\ast_{nN}(x(\om))<T^\ast-K \}.
$$

Our choice for $K$ is
$
K=K(\e_0):=10(N+L)\max r^\ast$.
We estimate the $\l_T$--measure of $\wt{M}_0\cap \{r^\ast_{nN}<T^\ast-K\}\setminus\L_n$ by partitioning $\wt{M}_0\cap \{r^\ast_{nN}<T^\ast-K\}\setminus\L_n(\e_0)$ into two pieces, which we then treat separately.
Define for this purpose
\begin{eqnarray*}
\mathcal W(\I)&:=&\bigcup W^{ss}_{\loc}(\om'),\textrm{ where the union is over all $\om'$ s.t. }W^{ss}_{\loc}(\om')\subseteq g^{T^\ast}[A_T],\\
\mathcal W(\II)&:=&\bigcup W^{ss}_{\loc}(\om'),\textrm{ where the union is over all $\om'$ s.t. }W^{ss}_{\loc}(\om')\not\subseteq g^{T^\ast}[A_T],\\
&&\textrm{but }\W^{ss}_{\loc}(\om')\cap g^{T^\ast}[A_T]\textrm{ has positive length}.
\end{eqnarray*}
Note that $\mathcal W(\I)$ and $\mathcal W(\II)$ are unions of local stable manifolds. Let $\mathcal N(\I), \mathcal N(\II)$ be the minimal number of local stable manifolds needed to cover these sets, up to sets of length zero.
Next define
\begin{eqnarray*}
A_T(\I)&:=& A_T\cap\wt{M}_0\cap \{r^\ast_{nN}<T^\ast-K\}\cap g^{-T^\ast}\left[\mathcal W(\I)\right],\\
A_T(\II)&:=& A_T\cap \wt{M}_0\cap \{r^\ast_{nN}<T^\ast-K\}\cap g^{-T^\ast}\left[\mathcal W(\II)\right].
\end{eqnarray*}
Clearly $A_T\setminus\L_n=[A_T(\I)\cup A_T(\II)]\setminus\L_n$.
The plan is to fix $n$, and estimate $\l_T[A_T(\I)\setminus\L_n]$ and $\l_T[A_T(\II)\setminus\L_n]$.

 \medskip
 \noindent
{\em Step 3.\/ $\sup_n\l_T[A_T(\I)\setminus\L_n]\leq 2C_r\e_0$ for all $T>\tau(\e_0)$. }

\medskip
\noindent{\em Proof.\/}
 We use the holonomy $\kappa_{n,N}$ which exchanges the  first and $n$-th blocks of $N$ symbols in $x$, see lemma \ref{properties_of_kappa_nN} above. The idea is to show that
\begin{equation}\label{thanks}
\kappa_{n,N}(A_T(\I)\setminus \L_n)\subset A_T\cap(\wt{M}_0(C)\setminus\L),
\end{equation}
which implies by
Lemma \ref{properties_of_kappa_nN} and the definition of $C_r$  that
\begin{eqnarray*}
\l_T[A_T(\I)\setminus \L_n]&=&\frac{\ell[A_T(\I)\setminus\L_n]}{\ell[A_T\cap\wt{M}_0]}\leq C_r\frac{\ell\circ\kappa_{n,N}[A_T(\I)\setminus\L_n]}{\ell[A_T\cap\wt{M}_0]}\\
&\leq &C_r\frac{\ell[A_T\cap\wt{M}_0(C)\setminus\L]}{\ell[A_T\cap\wt{M}_0]}\ \ \ \ \because(\ref{thanks})\\
&=&C_r\l_T^C(\wt{M}_0(C)\setminus\L)<2C_r\e_0\ \ \ (\textrm{step 2}),
\end{eqnarray*}
which proves the step.

We prove (\ref{thanks}). Suppose $\om=\pi(x,\xi,s+h(x))\in A_T(\I)\setminus\L_n$, and let $\om':=\kappa_{n,N}(\om)$.
\begin{enumerate}
\item $\om'\in A_T$:  The choice of $K$ is such that  $\om\in \{r_{nN}^\ast<T^\ast-K\}$ forces
$$
\hspace{1cm}r^\ast_{(n+1)N+L}(\om)\leq r^\ast_{nN}(\om)+(N+L)\max r^\ast<T^\ast.
$$
This implies that   $x(g^{T^\ast}\om)_0^\infty=x_p^\infty$ for $p>(n+1)N+L$.

 There exists some $m>0$ such that  $(\kappa_{n,N}^\ast(x))_{m}^\infty=x_{(n+1)N}^\infty$. By lemma \ref{properties_of_kappa_nN}, $x(\om')=\s^l(\kappa_{n,N}^\ast(x))$ with $|l|<L$, so  there must exist some $k\geq 0$ such that $x(\om')_k^\infty=x_{(n+1)N+L}^\infty$, whence  $\exists q>0$ such that $x(\om')_q^\infty=x_p^\infty=x(g^{T^\ast}\om)_0^\infty$. It is not difficult to see that this entails the existence of  $T^\#>0$ s.t.
$$
g^{T^\#}(\om')\in W^{ss}_{\loc}(g^{T^\ast}\om)\subset\mathcal W(\I)\subset g^{T^\ast}[A_T].
$$
Thus  $\om'\in g^{T^\ast-T^\#}(A_T)$. But image of $\kappa_{n,N}$ is always in $\Hor(\om_0)$, so we must have $T^\#=T^\ast$, and
$\om'\in A_T$.
\item $\om'\in \wt{M}_0(C)$: See lemma
\ref{properties_of_kappa_nN} part 3, and the choice of $E$ and $C$.
\item $\om'\not\in\L$: By construction, the first $N$-block of $x(\om')$ differs from the $n$-th $N$--block of $x(\om)$ by at most $2|l|<2L$ symbols. This means that
\begin{eqnarray*}
\hspace{1.5cm}\|f_N(x(\om'))-f_N(\s^{Nn}x(\om))\|&\leq &2L\max\|f\|<K^\ast\\
|r_N^\ast(x(\om'))-r_N^\ast(\s^{Nn}x(\om))|&\leq &|r_N(x(\om'))-r_N(x(\om))|+4\max|h|\\
&\leq &2L\max|r|+\var(r)+4\max|h|<K^\ast.
\end{eqnarray*}
Had $\om'$ been in $\L$, then it would have followed from these estimates that $\|\frac{f_N(\s^{Nn}x(\om))}{r^\ast_N(\s^{Nn}x(\om))}-\Xi_\vf\|<\e_0$, contrary to the assumption that $\om\not\in\L_n$.
\end{enumerate}
This finishes the proof of
(\ref{thanks}), and completes the step.

\medskip
\noindent
{\em Step 4.\/  $\sup_n\l_T[A_T(\II)\setminus\Lambda_n]\leq \frac{4C_r^2 d_{\max}}{\ell_{\min}}\e_0$ for all $T>\tau(\e_0)$.}

\medskip
\noindent
{\em Proof.\/}
Fix  $W^{ss}_{\loc}(g^{T^\ast}\om_1)\subset\mathcal W(\II)$ and
$W^{ss}_{\loc}(g^{T^\ast}\om_2)\subset \mathcal W(\I)$. We use $\vartheta_{T^\ast,\om_1,\om_2}$ to map  $g^{-T^\ast}W^{ss}_{\loc}(g^{T^\ast}\om_1)$ into  $g^{-T^\ast}W^{ss}_{\loc}(g^{T^\ast}\om_2)\in g^{-T^\ast}[\mathcal W(\I)]\subset A_T$, and then apply $\kappa_{N,n}$ to claim that
\begin{multline}\label{giving}
\kappa_{n,N}\circ\vartheta_{T^\ast,\om_1,\om_2}
\left(g^{-T^\ast}W^{ss}_{\loc}(g^{T^\ast}\om_1)\cap\wt{M}_0
\cap\{r_{nN}^\ast<T^\ast-K\}\setminus\L_n\right)\\\subset A_T\cap\wt{M}_0(C)\setminus\L.
\end{multline}

Suppose this were proved. Using the bounds on the  Radon-Nikodym derivatives of $\kappa_{n,N}$ and $\vartheta_{T^\ast,\om_1,\om_2}$, we can then deduce that
$$
\l_T\left(\!g^{-T^\ast}W^{ss}_{\loc}(g^{T^\ast}\om_1)\cap
\wt{M}_0\cap\{r_{nN}^\ast<T^\ast\!-\!K\}\setminus\L_n\!\right)\leq C_r^2 \l_T^C(\wt{M}_0(C)\setminus\L)<2C_r^2\e_0.
$$
Fixing $W^{ss}_{\loc}(\om_2)$, and  summing over all  $W^{ss}_{\loc}(g^{T^\ast}\om_1)$ needed to cover $\mathcal W(\II)$ up to sets of length zero,  we get
$
\l_T\left(A_T(\II)\setminus\L_n\right)\leq 2C_r^2\mathcal N(\II)\e_0$.

We claim that
$
\mathcal N(\II)\leq 2d_{\max}/\ell_{\min}$. To see this note that
every local stable manifold in $\mathcal W(\II)$ has length at least $\min\psi$, and is located at most $d_{\max}$ units of distance away from the endpoints of $g^{T^\ast}(A_T)$ (otherwise it would be contained in $g^{T^\ast}(A_T)$, which it is not). Since local stable manifolds are either equal or disjoint up to sets of length zero, this means that $\mathcal N(\II)\leq 2d_{\max}/\ell_{\min}$.

The step follows from this.
Thus it is enough to prove (\ref{giving}).

 The argument is similar to the one we used in step 3. Fix $\om\in g^{-T^\ast}W^{ss}_{\loc}(g^{T^\ast}\om_1)\cap\wt{M}_0
 \cap\{r_{nN}^\ast<T^\ast-K\}\setminus\L_n$ and set $\om':=(\kappa_{n,N}\circ\vartheta_{T^\ast,\om_1,\om_2})(\om)$.
\begin{enumerate}
\item $\om'\in A_T$: Set $\om'':=\vartheta_{T^\ast,\om_1,\om_2}(\om)$. Then
$\om''\in g^{-T^\ast}W^{ss}_{\loc}(g^{T^\ast}\om_2)$, and  $\exists k>0$, $l\in[-L,L]$ such that $|r_k^\ast(x(\om))-T^\ast|<5(\max r^\ast+\max |h|)$,
$x(\om'')_{k+M_{br}+l}^\infty=x(\om_2)_k^\infty$, and
$x(\om'')_{l}^{k+l}=x(\om)_0^k$ (lemma \ref{properties_of_vartheta}). This $k$ is larger than $(n+1)N+2L$, because by choice of $K$,
\begin{eqnarray*}
\hspace{1.5cm}r^\ast_{(n+1)N+2L}(x(\om))&\leq& r_{nN}^\ast(x(\om))+(N+2L)\max r^\ast\\
&<&T^\ast-5(\max r^\ast+\max |h|),
\end{eqnarray*}
and  $r_k^\ast(x(\om))>T^\ast-5(\max r^\ast+\max |h|)$.
Thus the tail $x(\om_2)_k^\infty$ survives $\kappa_{n,N}$, which means that $x(\om_2)_k^\infty$ appears as some tail of $x(\om')$.
This means that there exists $T^\#>0$ such that $g^{T^\#}(\om')\in W^{ss}_{\loc}(g^{T^\ast}\om_2)\subset g^{T^\ast}(A_T)$. As before $T^\#$ must be equal to $T^\ast$, whence $\om'\in A_T$.
\item $\om'\in \wt{M}_0$: $\vartheta_{T^\ast,\om_1,\om_2}$ moves points at most $E''$ units of distance. $\kappa_{n,N}$  moves points at most $E'$ units of distance. In total we move at most $2E$ units of distance from $\wt{M}_0$, which still leaves us well inside $\wt{M}_0(C)$.
\item $\om'\not\in\L$:  By construction, the first $N$--block of $x(\om')$ differs from the $n$-th $N$-block of $x$ by at most $4L$ coordinates (the edge effects of two shifts by at most $L$ units). Similar considerations to those used above show that had $\om'$ belonged to $\L$, then $\om$ would have had to belong to $\L_n$, a contradiction.
\end{enumerate}
(\ref{giving}) is proved and step 4 is done.
\medskip

Steps 3 and 4 imply the key lemma,  with
$C_0:=2C_r+4C_r^2 d_{\max}/\ell_{\min}$, and  $K(\e_0),T_0, N(\e_0), \tau(\e_0)$ as above.
 \hfill$\Box$


\section{Proof that a vector with asymptotic cycle in $\textrm{\em int}(\mathfrak C)$ is generic}
The purpose of this section is to prove
\begin{prop}\label{directprop}
For every $\e>0$ there is a  compact neighborhood $K_\vf(\e)\owns\Xi_\vf$ in $\R^d$ s.t. for all non-negative, non-identically zero continuous functions $f,g$, if  $T$ is large enough then
\begin{equation}\label{gribki}
\frac{\xi_{\ln T}(\om)}{\ln T}\in K_\vf(\e)\Longrightarrow e^{-\e}\frac{\int f dm_\vf}{\int g dm_\vf}\leq \frac{\int_0^T f(h^t\om)dt}{\int_0^Tg(h^t\om)dt}\leq e^{\e}\frac{\int f dm_\vf}{\int g dm_\vf}.
\end{equation}
\end{prop}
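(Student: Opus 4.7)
The strategy is to contract the arc $A_T=\{h^t\om_0:0\le t\le T\}$ by $g^{T^\ast}$ with $T^\ast:=\ln(T/T_0)$, expand the resulting bounded-length integral symbolically using Lemmas \ref{bowen} and \ref{margulis}, and read off its asymptotics from the Ruelle transfer operator $L_{-P(u_\vf),u_\vf}$ via a local limit theorem on $\Z^d$. The neighborhood $K_\vf(\e)$ is chosen so that these local-limit estimates fire at the right rate, uniformly in $\om_0$.

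By a density and approximation argument, we reduce to test functions $f=1_F$, $g=1_G$ with $F=\pi([a]\times\{\eta_0\}\times J)$, where $[a]\subset\Sigma^+$ is a cylinder, $\eta_0\in\Z^d$, and $J\subset\R$ is a small interval on which $\rho=\pi\circ\rho'$ is injective; then $\int f\,dm_\vf$ is read off directly from \eqref{BLmeasures}. Next write
$$\int_0^T f(h^t\om_0)\,dt \;=\; e^{T^\ast}\int_{g^{T^\ast}(A_T)} f\circ g^{-T^\ast}\,d\ell.$$
The arc $g^{T^\ast}(A_T)\subset\Hor(\om^\ast)$ with $\om^\ast:=g^{T^\ast}\om_0$ has length $T_0$ and meets finitely many full local stable manifolds $W^{ss}_{\loc}(\om')$, plus negligible boundary pieces. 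On each such $W^{ss}_{\loc}(\om')$, parametrise by past extensions of the fixed future $y^+=x(\om')_0^\infty$; Lemma \ref{margulis} supplies the density, and Lemma \ref{bowen}(4) identifies past extensions with $n$-step shift preimages of $y^+$. The contribution becomes
$$\sum_{n\geq 1}\;\sum_{\substack{\sigma^n z=y^+\\ z_0^{|a|-1}=a\\ \eta-f_n(z)=\eta_0\\ r^\ast_n(z)\in T^\ast+O(1)}} |J|\,\psi(z)\,e^{-r_n(z)}\cdot(1+o(1)),$$
where $\eta=\xi(\om^\ast)=\xi_{T^\ast}(\om_0)+O(1)$ is determined by $\om_0$.

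To recognise this as iterates of a twisted Ruelle operator, multiply and divide each term by $e^{-P(u_\vf)r_n(z)+\langle u_\vf,f_n(z)\rangle}$. On the support of the indicators $\{f_n(z)=\eta-\eta_0,\ r_n^\ast(z)\in T^\ast+O(1)\}$, the inserted twist pulls out as the scalar $e^{\langle u_\vf,\xi_{T^\ast}(\om_0)-\eta_0\rangle+(P(u_\vf)-1)T^\ast+O(1)}$. To remove the $n$-dependent constraint $f_n(z)=\eta-\eta_0$, apply Fourier inversion over $\T^d$ as in \cite{L,BL1,BL}, turning the remaining symbolic sum into
$$\int_{\T^d} e^{-2\pi i\langle\theta,\eta-\eta_0\rangle}\,\sum_n\bigl(L_{-P(u_\vf),\,u_\vf+2\pi i\theta}^{\,n} F_a\bigr)(y^+)\,d\theta,\qquad F_a(z):=\psi(z)\,1_{[a]}(z).$$
Perturbation theory of $L_{-P(u_\vf),u_\vf+2\pi i\theta}$ around $\theta=0$ (spectral gap of $L_{-P(u_\vf),u_\vf}$, quadratic expansion of the leading eigenvalue $\lambda(\theta)$) and Gaussian saddle-point evaluation of the Fourier integral yield
$$\int_0^T f(h^t\om_0)\,dt \;=\; \Phi(T,\om_0)\cdot e^{-\langle u_\vf,\eta_0\rangle}\!\int \psi\cdot 1_{[a]}\,d\nu_\vf\cdot|J|\cdot(1+o(1)),$$
with $\Phi(T,\om_0)$ an $f$-independent prefactor assembled from $T^\ast$, $\xi_{T^\ast}(\om_0)$, $\Psi_\vf(y^+)$, and the Gaussian factor. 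The corresponding formula for $g$ makes $\Phi(T,\om_0)$ cancel in the ratio, and matching what remains against \eqref{BLmeasures} produces exactly $\int f\,dm_\vf/\int g\,dm_\vf$.

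The neighborhood $K_\vf(\e)$ is chosen so that for $\xi_{T^\ast}(\om_0)/T^\ast\in K_\vf(\e)$, every $o(1)$ above becomes uniformly $e^{\pm\e}$: the Fourier saddle point stays inside the analyticity domain of $\theta\mapsto\lambda(\theta)$, the local-limit remainder is $e^{\pm\e/3}$, and the boundary/fluctuation corrections are $e^{\pm\e/3}$. The main obstacle, foreshadowed in the introduction, is that the \cite{LS}-style symbolic expansion of $\int_0^T f(h^t\om_0)\,dt$ typically depends on almost-sure equidistribution of the backward shift preimages of $x(\om_0)^+$ under $\nu_\vf$; producing a quantitative variant valid for \emph{every} $\om_0$ with $\xi_{T^\ast}(\om_0)/T^\ast\in K_\vf(\e)$---driven only by the asymptotic-cycle hypothesis, not by an a.e.\ statement---is the heart of the basic lemma flagged by the authors in the introduction.
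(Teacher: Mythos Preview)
Your overall strategy coincides with the paper's: contract $A_T$ by $g^{T^\ast}$, decompose the short arc into symbolic local stable manifolds, express each piece as a sum over shift preimages, evaluate by a twisted-transfer-operator local limit theorem \`a la Lalley and Babillot--Ledrappier, and cancel the $f$-independent prefactor in the ratio. That part is sound, and the local limit estimate (your saddle-point step) is indeed uniform in the base point $y^+$, so no almost-sure input is needed there.

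The genuine gap is your claim that the boundary pieces are ``negligible.'' With a \emph{fixed} coding they are not: the arc $g^{T^\ast}(A_T)$ meets $N^-$ full local stable manifolds and $N^+-N^-$ partial ones, and in the paper's Step~4 the relative error from the partial pieces is
\[
\frac{\max\Psi_\vf}{\min\Psi_\vf}\cdot k_\vf^2\cdot\frac{N^+-N^-}{N^-},
\]
where $k_\vf$ bounds the oscillation of $e^{T_i^\# H(\xi_i^\ast/T_i^\#)}$ across the arc. With the original coding and arc length $1$, $(N^+-N^-)/N^-$ is a fixed positive number, so this error does not tend to zero and you cannot reach $e^{\pm\e}$ for arbitrary $\e$. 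Enlarging the arc to length $T_0$ does not help: then $(N^+-N^-)/N^-=O(1/T_0)$, but the $\xi_i^\ast$ spread over $O(T_0)$ lattice sites, so the exponents $T_i^\# H(\xi_i^\ast/T_i^\#)$ differ by $\|\nabla H(\Xi_\vf)\|\cdot O(T_0)=\|u_\vf\|\cdot O(T_0)$ and $k_\vf^2/T_0\to\infty$ for $\vf\neq 0$. The paper's remedy is an entire preliminary subsection that \emph{modifies the coding}: it refines the Markov partition and thickens the Poincar\'e section so that $\max r^\ast$, $d_{\max}$, and $\max\psi$ are all below an auxiliary parameter $\e^\ast$, while proving (via an induced-Ruelle-operator argument) that $\max\Psi_\vf/\min\Psi_\vf$ and $\max\psi/\min\psi$ stay bounded by a constant $C_\vf$ independent of $\e^\ast$. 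Only then does $(N^+-N^-)/N^-\le 4C_\vf\e^\ast/(1-2\e^\ast)$ become small and the final error $F_\vf(\e,\e^\ast)$ tend to $1$ as $\e,\e^\ast\to 0$. This step is not cosmetic and is missing from your outline.

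Two smaller remarks. Your prefactor $\Phi(T,\om_0)$ actually depends on the local manifold $\om'$ through $y^+$ and its $\Z^d$-coordinate; what cancels in the ratio is the sum $\sum_{\om'}\Phi(T,\om_0,\om')$, and one must check the $(1+o(1))$ errors are uniform in $\om'$---the paper does this in its Step~4. Finally, the ``main obstacle'' you flag is slightly misplaced: the a.e.\ issue from \cite{LS} is not in the transfer-operator estimate but in Step~1, the decomposition of the arc into local stable manifolds; in constant curvature this holds for \emph{every} $\om$ because the Bowen--Series coding makes distinct symbolic local stable leaves disjoint up to null sets (Lemma~\ref{bowen}(2)).
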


Thus  every vector with asymptotic cycle $\Xi_\vf$ is generic for $m_\vf$. Indeed we have the stronger statement that if there is a sequence $T_n\to\infty$ such that
$
\xi_{T_n}(\om)/T_n\to\Xi_\vf,
$
then ${\int_0^{e^{T_n}} f(h^t\om)dt}/{\int_0^{e^{T_n}}g(h^t\om)dt}\to{\int f dm_\vf}/{\int g dm_\vf}$ for all $f,g$ as above.

Another way of looking at this is to say that if $\xi_T(\om)/T$ has more than one accumulation point in ${int}(\mathfrak C)$ then ${\int_0^{T} f(h^t\om)dt}/{\int_0^{T}g(h^t\om)dt}$ oscillates without converging, and therefore cannot be  generic for any measure. This agrees with proposition \ref{converseprop}, but it does not prove it, because of the lack of information  on $\om$'s such that $\xi_T(\om)/T\to\del\mathfrak C$.

In what follows, we fix $\e>0$ and look for $K_\vf(\e)$ as above.

\subsection*{Modification of the Coding}
Fix  some small $\e^\ast=\e^\ast(\e)>0$, to be determined later.
Recall the symbolic coding of section \ref{symbsection}, in particular the definitions of $\psi$ and $\Psi_\vf$.
 Let $d_{\max}$ denote  the maximal diameter of a symbolic local stable manifold, measured in the intrinsic metric of the horocycle which contains it.

  We claim that we can  change the coding to ensure that  the new roof function and symbolic strong stable manifolds satisfy
\begin{eqnarray*}
\max r^\ast_{\modi}&<&\e^\ast,\\
\max |h_{\modi}|&<&\e^\ast\\
(d_{\max})_{\modi}&<&\e^\ast,\\
\max\psi_{\modi}&<&\e^\ast,
\end{eqnarray*}
$$
\diam\bigl(\pi_{\modi}\{(x,\xi,s):x_0=a_0, \xi=\xi_0, 0\leq s<r_{\modi}^\ast(x)\}\bigr)< \e^\ast\textrm{ for all }a_0, \xi_0.
$$
Moreover, we claim that this modification can be done in such a way that
$$
\frac{\max\psi_{\modi}}{\min\psi_{\modi}}, \frac{\max(\Psi_\vf)_{\modi}}{\min(\Psi_\vf)_{\modi}}<C_\vf
$$
where $C_\vf$ does not depend on $\e^\ast$ or $\e$.

Here is how to do this.

The coding in lemma \ref{bowen} is based on finding a Poincar\'e section for the \emph{geodesic} flow, with a Markov section map $T$.
Take a  power $\a_L:=\bigvee_{i=-L}^L T^{-i}\a$ of the Markov partition $\a$ of this section,
with  $L$ so large that
$$
V(L):=\sum_{k=L}^\infty \sup\{|r(x)-r(y)|:x_{-k}^k=y_{-k}^k\}<\tfrac{1}{2}\e^\ast.
$$
 Such $L$ exist because $r$ is H\"older continuous.

Increase the Poincar\'e section by adding to it the sets
$$
\textrm{$g^{k\e^\ast/2}(A)$, for  }A\in\a_L\textrm{ and all }k=1,\ldots,\lfloor 2\min_A r/\e^\ast\rfloor.
$$
This is again a Poincar\'e section for the geodesic flow. It can be used to code the geodesic flow as a special flow over a (new) subshift of finite type with roof function $r^\ast_{\modi}$ such that $\max r^\ast_{\modi}\leq\e^\ast/2<\e^\ast$ and $\var(r^\ast_{\modi})\leq V(L)<\e^\ast$.

Recall that $h_{\modi}$ is (any) H\"older continuous function such that $r_{\modi}:=r^\ast_{\modi}+h_{\modi}\circ\s_{\modi}-h_{\modi}$ only depends on the non-negative coordinates (w.r.t  $\Sigma_{\modi}$). There are  explicit constructions of such functions which lead to functions $h_{\modi}$ such that  $|h_{\modi}|\leq \var(r^\ast_{\modi})$, see for example the proof of  lemma 1.6 in \cite{Bo2}. Since $\var(r^\ast)<\e^\ast$, this gives $h_{\modi}$ with $|h_{\modi}|<\e^\ast$.

We claim that if $L$ is large enough, then the diameter and length of all symbolic manifolds in the new coding are less than $\e^\ast/2$, whence $(d_{\max})_{\modi}, \max(\psi_{\modi})<\e^\ast$.

To see this observe that for every $x$, the sets
$
\pi\{(y,\xi,s+h(y)):y_{-L}^L=x_{-L}^L\}$
decrease to $\{\pi(x,\xi,s+h(x))\}$ as $L\to\infty$. Thus for every $x$ there exists $L(x)$ such that the length and diameter of
$$
\pi\left(\{(y,\xi,s): y_{-L(x)}^{L(x)}=x_{-L(x)}^{L(x)}, \xi=\xi_0, 0<s<\e^\ast/2\}\right)
$$
are less than $\e^\ast$. (This $L(x)$ does not depend on $\xi_0$, because the deck transformations are isometries.)
A compactness argument shows that $L(x)$ can be chosen to be uniform in $x$.

We check that the ratio of the maxima and minima of $\psi_{\modi}$ and $(\Psi_{\vf})_{\modi}$ remains  bounded by a constant independent of $L$, $\e^\ast$ and $\e$.

First note that our modification of the coding does not change the value of $P(u_\vf)$, because as mentioned above the diffeomorphism $P(\cdot)$ does not depend on the coding (it can be described by a `section--free' formula, see \cite{BL}).

Unfortunately $(\Psi_\vf)_{\modi}:\Sigma^+_{\modi}\to\R$ is affected by the coding,   because it is defined to be the normalized eigenfunction of
$L_{-P(u_\vf),u_\vf}^{\modi}:C(\Sigma^+_{\modi})\to C(\Sigma^+_{\modi})$
and $r_{\modi},f_{\modi}$ and $\Sigma^+_{\modi}$ are different from $r,f$ and $\Sigma^+$.

The first observation is that any change which is solely due to refining the Markov partition
 does not affect the minimum of maximum of $\Psi_\vf$,
because the subshift of finite type it generates is conjugate to the original subshift.
It is therefore enough to focus on changes in the roof function $r$ and
the Poincar\'e section.

The modified  Poincar\'e section can be coded using a subshift of finite type $\Sigma^+_{\modi}$ with two  types of states: the old set of states $S$ of $\Sigma^+$,  and a new set of states  $S'$ each of which has a unique predecessor (i.e. a state which can lead to it).
It is easy to see that $\max r_{\modi}\leq\max r$, and  $\max\|f_{\modi}\|\leq \max\|f\|$ (in fact $f_{\modi}=0$ except perhaps on states which lead to $S$).

One can recover $\Sigma^+$ from $\Sigma^+_{\modi}$ by inducing: If one induces the shift $\s_{\modi}:\Sigma^+_{\modi}\to \Sigma^+_{\modi}$ on the union of the states in  $S$, then  one gets a dynamical system  isomorphic to $\s:\Sigma^+\to\Sigma^+$.
The same induction procedure can be used to recover $L_{-P(u_\vf),u_\vf}$ from $L_{-P(u_\vf),u_\vf}^{\modi}$:
If $\tau$ is the first return time to the states in $S$,
then $L_{-P(u_\vf),u_\vf}$ is conjugate to the operator  $F\mapsto [(L_{-P(u_\vf),u_\vf}^{\modi})^{\tau(x)}F](x)$ on $C(\Sigma^+_{\modi})$.
Standard results on inducing Ruelle operators imply that this conjugacy maps $(\Psi_\vf)_{\modi}|_{\bigcup S}$ to $\Psi_\vf$.
It follows that
$$
\max(\Psi_\vf)_{\modi}|_{S}=\max\Psi_\vf\textrm{ and }
\min(\Psi_\vf)_{\modi}|_{\bigcup S}=\min\Psi_\vf.
$$

Consider now points  $x\in [a]\subset\bigcup S'$. Let $k$ be the minimal natural number  such that $x=\s^k(y)$ and $y\in \bigcup S$. Since  every state in $S'$ has exactly one predecessor,  and since  $\Psi_\vf= L_{-P(u_\vf),u_\vf} \Psi_\vf$,
$$
\Psi_\vf(x)=e^{-P(u_\vf)(r_{\modi})_k(y)+\<u_\vf,(f_{\modi})_k(y)\>}\Psi_\vf(y).
$$
It is easy to verify that
 $|(r_{\modi})_k(y)|\leq |r^\ast(y)|+2\max |h|$ and that $(f_{\modi})_k(y)$ is either equal to zero or to $f(y)$. We conclude that
\begin{eqnarray*}
\Psi_\vf(x)&\leq & e^{|P(u_\vf)|(\max r^\ast+2\max|h|)+\|u_\vf\|\max\|f\|}\max(\Psi_\vf)_{\modi}|_{\bigcup S}  \\
\Psi_\vf(x)&\geq & e^{-|P(u_\vf)|(\max r^\ast+2\max|h|)-\|u_\vf\|\max\|f\|}\min(\Psi_\vf)_{\modi}|_{\bigcup S}.
\end{eqnarray*}

It follows that
$$
\frac{\max(\Psi_\vf)_{\modi}}{\min(\Psi_\vf)_{\modi}}\leq \const \x\frac{\max\Psi_\vf}{\min\Psi_\vf}
$$
where the constant is independent of $\e^\ast$, $\e$.

The bound on $\max \psi_{\modi}/\min\psi_{\modi}$ can be obtained in exactly the same way. Indeed, $\psi_{\modi}=(\Psi_0)_{\modi}$ where $0$ is the trivial homomorphism, see proposition 4.5 in \cite{BL}.

\medskip
This shows that the new coding is as required.
Henceforth we work with this coding, and drop the decorations by `new'.

\subsection*{Basic Sets}
Working with $\e, \e^\ast$ and  the symbolic coding of the previous section, we consider the following sets, which we name {\em $\e^\ast$--basic sets}, (or just `basic sets'):
\begin{equation}
\label{E}
E=\pi\{(x,\xi,s+h(x)):x\in[\un{a}]:=[\dot{a}_0,\ldots,a_{n-1}], \a\leq s<\b\}\subset\Sigma\x\Z^d\x\R,
\end{equation}
where $0\leq \a\leq\b\leq  \inf_{[\un{a}]}r^\ast$.
We call $\b-\a$ the {\em width} of $E$.

Note that $\diam(E)<\e^\ast$, and that
$$
m_\vf(E)=\const e^{\<u_\vf,\xi\>}\int_\a^\b e^{-P(u_\vf) s}ds\int_{[\un{a}]}\psi d\nu_{\vf}
$$
with the constant independent of $\e^\ast, \e$.
\begin{lem}\label{basiclemma}
If $\e^\ast$ is sufficiently small, then there is a
 compact neighborhood $K_\vf\owns\Xi_\vf$ in $\R^d$ s.t. for  any two $\e^\ast$--basic sets $E_1, E_2$, if $T$ is large enough
$$
 \frac{\xi_{\ln T}(\om)}{\ln T}\in K_\vf\Longrightarrow e^{-\e}\frac{m_\vf(E_1)}{m_\vf(E_2)}\leq \frac{\int_0^T 1_{E_1}(h^t\om)dt}{\int_0^T 1_{E_2}(h^t\om)dt}\leq e^{\e}\frac{m_\vf(E_1)}{m_\vf(E_2)}.
$$
\end{lem}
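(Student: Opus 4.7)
My plan is to reduce $\int_0^T 1_E(h^t\om)\,dt$ to a symbolic counting problem along the forward orbit $\{\s^n x(\om)\}$, then evaluate it by combining Ruelle's Perron--Frobenius theorem for the tilted operator $L_{-P(u_\vf),u_\vf}$ with a pointwise local-limit estimate that exploits the hypothesis $\xi_{\ln T}(\om)/\ln T\in K_\vf$. Set $T^\ast:=\ln(T/T_0)$, so $g^{T^\ast}(A_T)$ is a horocycle arc of bounded hyperbolic length $T_0$. Because the modified coding makes every symbolic local stable manifold have length $\leq\max\psi<\e^\ast$ and the basic set $E=\pi([\un a]\x\{\xi_E\}\x[\alpha,\beta))$ has diameter $<\e^\ast$ with the product structure in $(x,\xi,s)$-coordinates, each local stable manifold $W^{ss}_{\loc}(\om_k)$ contained in $A_T$ is entirely in $E$ or disjoint from $E$, up to an $O(\e^\ast)$ boundary correction. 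Using Lemma \ref{bowen}(5) and the equivalence relation (\ref{equiv}), the contributing $\om_k$'s are parametrized by the $p\ge 0$ satisfying: (i) $x(\om_k)=(\un a,x(\om)_p^\infty)$; (ii) the $\Z^d$-constraint $f_p(x(\om))-f_{|\un a|}(\un a,x(\om)_p^\infty)=\xi_E-\xi(\om)$; (iii) the resulting $s$-coordinate $s_k(p)$ of $\om_k$ lies in $[\alpha,\beta)$; and (iv) $r_p^\ast(x(\om))\le T^\ast+O(1)$ so that $\om_k\in A_T$. Combining with Lemma \ref{margulis} for $\ell(W^{ss}_{\loc}(\om_k))$ yields
\begin{equation*}
\int_0^T 1_E(h^t\om)\,dt=\!\!\sum_{p\in J(T^\ast,\om,E)}\!\!e^{-s_k(p)}\psi(\un a,x(\om)_p^\infty)\,[1+O(\e^\ast)],
\end{equation*}
with $J(T^\ast,\om,E)$ the constraint set (i)--(iv).

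Next, reweight each summand by $e^{-P(u_\vf)r_p^\ast(x(\om))+\<u_\vf,f_p(x(\om))\>}$. On $J(T^\ast,\om,E)$ this weight is essentially constant, equal to $e^{-P(u_\vf)T^\ast+\<u_\vf,\xi_E-\xi(\om)\>}[1+O(\e^\ast)]$, because (ii) pins $f_p$ and (iii)--(iv) pin $r_p^\ast$ up to bounded error. In the ratio $\int 1_{E_1}/\int 1_{E_2}$ the $T^\ast$- and $\om$-dependent factors cancel, while the remaining $E_i$-dependent prefactor $e^{\<u_\vf,\xi_{E_i}\>}\int_{\alpha_i}^{\beta_i}e^{-P(u_\vf)s}\,ds$ reproduces, up to $[1+O(\e^\ast)]$, the analogous prefactor of $m_\vf(E_i)$ from (\ref{BLmeasures}). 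The remaining reweighted sum $\sum_p e^{-P(u_\vf)r_p^\ast+\<u_\vf,f_p\>}\psi(\un a_i,x(\om)_p^\infty)1_{J_i}(p)$, after the standard Birkhoff decomposition for $L_{-P(u_\vf),u_\vf}$, should be identified in the ratio with $\int_{[\un a_1]}\psi\,d\nu_\vf/\int_{[\un a_2]}\psi\,d\nu_\vf$, completing the match with $m_\vf(E_1)/m_\vf(E_2)$.

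The hard part is that $J(T^\ast,\om,E)$ is a very thin set: it imposes simultaneously $r_p^\ast(x(\om))\approx T^\ast$ \emph{and} $f_p(x(\om))=\xi_E-\xi(\om)+O(1)$, so the asymptotic of the reweighted sum requires a local limit theorem for the joint cocycle $(r_p^\ast,f_p)$ along the specific orbit of $\om$. The $m_\vf$-almost-everywhere version of such an LLT is the content of \cite{LS} (building on \cite{BL1,L}), but here a deterministic substitute is needed. Since $\Xi_\vf=(\nabla P)(u_\vf)$ is the drift associated to the Gibbs measure of the tilted potential $-P(u_\vf)r+\<u_\vf,f\>$, the hypothesis $\xi_{\ln T}(\om)/\ln T\in K_\vf$ says precisely that the empirical drift of $f_p/r_p^\ast$ along the orbit of $\om$ at scale $r_p^\ast\approx T^\ast$ matches this Gibbs drift. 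Converting this drift matching into quantitative LLT estimates, with total error $<\e$ uniformly over basic sets $E_1,E_2$ for appropriate choices of $K_\vf$ and $\e^\ast$, is the technical heart of the argument and is where the proof departs from the a.e.\ framework of \cite{LS}.
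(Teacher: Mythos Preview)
Your parametrization in step (i) is incomplete, and this is what makes the problem look harder than it is. You claim that the local stable manifolds $W^{ss}_{\loc}(\om_k)\subset A_T$ meeting $E$ have futures of the form $(\un a, x(\om)_p^\infty)$; but in fact the future of such an $\om_k$ can be $(\un a, w, x(\om)_m^\infty)$ for \emph{any} admissible word $w$, where $m$ is the index with $r_m^\ast(x(\om))\approx T^\ast$. The point $\om_k$ lies on the horocycle of $\om$, so its forward geodesic is asymptotic to that of $\om$; symbolically this forces the futures to \emph{eventually} agree, but imposes no constraint whatsoever on the prefix before they merge. Your list keeps only those preimages that happen to run along the specific orbit of $\om$ (namely $w=(x(\om)_p,\dots,x(\om)_{m-1})$) and discards the exponentially many others. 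This is why you end up needing a local limit theorem along a single orbit --- a hard statement which, moreover, is not what is required.

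Once the missing preimages are restored, the difficulty you identify disappears. The paper's organization makes this transparent: push $A_T$ forward by $g^{T^\ast}$ to the length-one arc $A_1(g^{T^\ast}\om)$, cover that arc by the finitely many local stable manifolds $W^{ss}_{\loc}(\om_i^\ast)$, and pull each one back. The quantity $J_{T^\ast}(\om_i^\ast,E)=\ell\bigl[E\cap g^{-T^\ast}W^{ss}_{\loc}(\om_i^\ast)\bigr]$ is then a sum over \emph{all} $y\in\Sigma^+$ with $\s^n y=(x_i^\ast)_0^\infty$, $y\in[\un a]$, $r_n(y)\in T_i^\#+[\alpha,\beta]$ and $f_n(y)=\xi_i^\ast-\xi_E$. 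This is a transfer-operator (renewal) sum, depending on $\om$ only through the data $(x_i^\ast)_0^\infty$ and $\xi_i^\ast$, and Lalley's method as implemented by Babillot--Ledrappier yields the deterministic asymptotic
\[
J_{T^\ast}(\om_i^\ast,E)=\const\cdot e^{\pm C_\vf\e^\ast}\, m_\vf(E)\, e^{\pm\e}\,\frac{e^{T_i^\# H(\xi_i^\ast/T_i^\#)}}{(T^\ast)^{d/2}}\,\Psi_\vf(x_i^\ast),
\]
valid whenever $\xi_i^\ast/T^\ast$ lies in a fixed compact neighborhood of $\Xi_\vf$. No pointwise LLT along the orbit of $\om$ is needed. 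The hypothesis $\xi_{\ln T}(\om)/\ln T\in K_\vf$ is used only to guarantee that each $\xi_i^\ast/T_i^\#$ stays near $\Xi_\vf$ (all the $\om_i^\ast$ sit within bounded distance of $g^{T^\ast}\om$), so that the Lalley asymptotic applies and so that the factor $e^{T_i^\# H(\xi_i^\ast/T_i^\#)}\Psi_\vf(x_i^\ast)$, which is independent of $E$, cancels in the ratio $I_T(\om,E_1)/I_T(\om,E_2)$ up to a multiplicative error controlled by $\e$ and $\e^\ast$.
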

\begin{proof}
Lemma \ref{basiclemma} is proved  by finding  the asymptotic behavior of
$
I_T(\om,E):=\int_0^T 1_E(h^t\om)dt$
as $T\to\infty$.
Such asymptotics can be derived  for {\em almost every} $\om$ when $\vf\equiv 0$ from the main lemma of \cite{LS}. Here we describe the  necessary modifications to cover $\vf\not\equiv 0$, and to replace the quantification `$m_\vf$--almost everywhere' by  a condition on $\xi(g^{\ln T}\om)/\ln T$.

In what follows we assume without loss of generality that $\om\in \wt{M}_0$, and denote  $A_T(\om):=\{h^t(\om):0\leq t\leq T\}$ and $T^\ast:=\ln T$.

\medskip
\noindent
{\em Step 1.\/}  {\em
 For all $\om$ and $T>1$,
 $\exists N^+, N^-\in\N$ and $\exists\om_i^\ast\in
 g^{T^\ast}(A_T(\om))=A_{1}(g^{T^\ast}\om)$ $(i=0,\ldots,N^+)$ s.t.  if $
J_{T^\ast}(\om^\ast_i,E):=\ell[E\cap g^{-T^\ast}W^{ss}_{\loc}(\om^\ast_i)]$, then
 $$
\sum_{i=0}^{N^-}J_{T^\ast}(\om_i^\ast,E)\leq I_T(\om,E) \leq \sum_{i=0}^{N^+} J_{T^\ast}(\om_i^\ast,E) \textrm{ and } 0<\frac{N^+-N^-}{N^-}<\frac{4C_\vf\e^\ast}{1-2\e^\ast}.$$
}

\medskip
\noindent
{\em Proof.\/} This is the same as in step 1 in lemma 1 of \cite{LS}, except for the fact that we work in constant negative curvature, and this  allows us to argue for all $\om$, and not just almost all $\om$. We explain why.

We view $I_T(\om,E)$ as an integral on the horocyclic arc
$
A_T(\om)$
with respect to its hyperbolic length measure $\ell=\ell_\om$:
$$
I_T(\om,E)=\int_0^T 1_E(h^t\om)\/dt=\ell_\om[E\cap A_T(\om)]=\ell_\om[E\cap g^{-T^\ast}A_1(g^{T^\ast}\om)],
$$
where the last equality is because of the  commutation relation between the geodesic and horocycle flows.

Let $W^{ss}_{\loc}(\om_i^\ast)$ $(i=1,\ldots,N^-)$ be the symbolic local stable manifolds  contained in $A_{1}(g^{T^\ast}\om)$. Add to the list the $W^{ss}_{\loc}(\om_i^\ast)$ $(i=N^-+1,\ldots,N^+)$ which intersect $A_{1}(g^{T^\ast}(\om))$ with positive measure without being contained in it.

In constant negative curvature,  for {\em every} $\om$, any two symbolic local stable manifolds are either equal or disjoint up to sets of length zero (lemma \ref{bowen}, part 2). Therefore,   $I_T(\om,E)$ can be sandwiched between $\sum_{i=0}^{N^\pm}J_{T^\ast}(\om_i^\ast,E)$ as above.

The $N^+-N^-$ symbolic local stable manifolds which intersect $A_{1}(g^T\om)$ without being contained in it must be contained in a $d_{\max}$--neighborhood of the endpoints of $A_{1}(g^T\om)$, thus
$N^+-N^-<4d_{\max}/\min\psi$ and
$N^->(1-2d_{\max})/\max\psi$. Since $\max\psi/\min\psi<C_\vf$,
$
0<\frac{N^+-N^-}{N^-}<\frac{4C_\vf d_{\max}}{1-2d_{\max}}<\frac{4C_\vf\e^\ast}{1-2\e^\ast}.
$

\medskip
\noindent
{\em Step 2.\/} {\em Suppose $\om, \om_i^\ast$ have symbolic coordinates $(x,0,t+h(x))$,
$(x_i^\ast,\xi_i^\ast,t_i^\ast+h(x_i^\ast))$, and assume $T>e^{\e^\ast}$.
Set  $T_i^\#:=T^\ast-t_i^\ast$. Then for every $\e^\ast$--basic set $E$,
\begin{equation}\label{Jest}
J_{T^\ast}(\om_i^\ast,E)={e}^{\pm |\b-\a|}\sum_{n=0}^\infty\sum_{\s^n(y)=(x_i^\ast)_0^\infty}\!\!\!1_{[\a,\b]}(r_n(y)-T_i^\#)\d_{\xi_i^\ast, \xi+f_n(y)}1_{[\un{a}]}(y)\psi(y),
\end{equation}
where the  sum ranges over the {\em one}--sided subshift of finite type $\Sigma^+$.
}

\medskip
\noindent
{\em Proof.\/} See step 2 in lemma 1 of \cite{LS}.

\medskip
\noindent
We note for future reference that  $|T_i^\#-T^\ast|=|t_i^\ast|<\max r^\ast+\max|h|<2\e^\ast$, and that $|\b-\a|\leq \max r^\ast<\e^\ast$.

\medskip
\noindent
{\em Step 3  \cite{BL}}\/.
 {\em For every $\e^\ast$--basic set $E$, there exists  a compact
 neighborhood $K^0_\vf$ of $\Xi_\vf$ and  $T_0>0$  such that for all $T\ge T_0$,
$$
 \frac{\xi_i^\ast}{T^\ast}\in K_\vf^0\Rightarrow J_{T^\ast}(\om_i^\ast,E)=\const e^{\pm C_\vf\e^\ast}m_\vf(E)e^{\pm\e}
\frac{e^{T_i^\# H(\frac{\xi^\ast_i}{T_i^\#})}}{(\ln T)^{d/2}}\Psi_\vf(x_i^\ast),
$$
where $H(\cdot)$ is the minus the Legendre transform of   $P(\cdot)$, and $H(\cdot)$,  the constant  in front of the expression, $C_\vf$, and $K_\vf$ do not depend on $\e,\e^\ast, E$ or $\om$.
}

\medskip
\noindent
{\em Proof.\/}  This is done by estimating the sum (\ref{Jest})  using an elaboration of Lalley's method \cite{L}, as in Babillot \& Ledrappier \cite{BL1} or \cite{BL}.\footnote{For a detailed account of the calculation in the particular case $\vf\equiv 0$, see the appendix of \cite{LS}. The modifications needed to treat general $\vf$'s are routine.}

\medskip
\noindent
{\em Step 4.\/ There is
a function  $ \displaystyle F_\vf(\e,\e^\ast)\xrightarrow[\e,\e^\ast\to
  0^+]{} 1$,
s.t. for all $\e^\ast$--basic sets $E_1$, $E_2$, there is
 a compact  neighborhood $K_\vf\subset K_\vf^0$ of $\Xi_\vf$ and $T_1>0$ such that
for all
 $\om \in T^1 M$ and $T\ge T_1$
with $\frac{\xi_{T^\ast}(\om)}{T^\ast}\in K_\vf$,
$$
\frac{1}{F_\vf(\e,\e^\ast)}\frac{m_\vf(E_1)}{m_\vf(E_2)}\,\le
\frac{I_{T}(\om,E_1)}{I_T(\om, E_2)}\leq F_\vf(\e,\e^\ast)\frac{m_\vf(E_1)}{m_\vf(E_2)}\,.
$$
}

\medskip
\noindent
{\em Proof.}\/ It is enough to prove the inequality on the right.
By the previous steps,
\begin{equation}\label{inequality}
\frac{I_T(\om,E_1)}{I_T(\om,E_2)}\leq
e^{2C_\vf\e^\ast}e^{2\e}\frac{m_\vf(E_1)}{m_\vf(E_2)}
\left[1+\frac{\sum\limits_{i=N^-+1}^{N^+}e^{T_i^\#
      H(\frac{\xi_i^\ast}{T_i^\#})}\Psi_\vf(x_i^\ast)}{\sum\limits_{i=1}^{N^-}e^{T_i^\# H(\frac{\xi_i^\ast}{T_i^\#})}\Psi_\vf(x_i^\ast)}\right]
\end{equation}
Fix some small compact neighborhood $K_\vf\subset K_\vf^0$ of $\Xi_\vf$ (we shall see later how small), and assume $\frac{\xi_{T^\ast}(\om)}{T^\ast}\in K_\vf$.

By construction all the
$\om_i^\ast$  belong to a $d_{\max}$--neighborhood of  $A_{1}(g^{T^\ast}\om)$, a horocyclic arc of length $1$.  Thus their $\Z^d$--coordinates $\xi_i^\ast$ are within a bounded distance $D$ from each other.
 Since $|T_i^\#-T^\ast|<2\e^\ast$, for all $T$ large enough,
  $\|\frac{\xi_i^\ast}{T_i^\#}-\Xi_\vf\|<2\diam K_\vf$. Thus, if  $\diam K_\vf$ is small enough, then
$$
\hspace{-8cm}|T_i^\#H(\tfrac{\xi_i^\ast}{T_i^\#})-T^\ast H(\tfrac{\xi_{T^\ast}(\om)}{T^\ast})| \leq
$$
\begin{eqnarray*}
&\leq& |T_i^\#-T^\ast|\cdot |H(\tfrac{\xi_i^\ast}{T_i^\#})|+T^\ast|H(\tfrac{\xi_i^\ast}{T_i^\#})-H(\tfrac{\xi_{T^\ast}(\om)}{T^\ast})\bigr|\\
&\leq & 2\e^\ast[H(\Xi_\vf)+\e^\ast]+(\|(\nabla H)(\Xi_\vf)\|+\e^\ast)T^\ast\left\|\frac{\xi_i^\ast}{T_i^\#}-\frac{\xi_{T^\ast}(\om)}{T^\ast}\right\|.
\end{eqnarray*}
Now, for $T$ large enough so that $\|\frac{\xi_i^\ast}{T^\#_i}\|<2\diam K_\vf+\|\Xi_\vf\|$,  we have
\begin{eqnarray*}
T^\ast\left\|\frac{\xi_i^\ast}{T_i^\#}-\frac{\xi_{T^\ast}(\om)}{T^\ast}\right\|
&\leq& T^\ast
\left\|\frac{\xi_i^\ast}{T_i^\#}-\frac{\xi_i^\ast}{T^\ast}\right\|+
T^\ast \left\|\frac{\xi_i^\ast}{T^\ast}-\frac{\xi_{T^\ast}(\om)}{T^\ast}\right\|\\
&\leq
&T^\ast\left\|\frac{\xi_i^\ast}{T^\#_i}\right\|\left|1-\frac{T_i^\#}{T^\ast}\right|
+
D\\
&\leq &(2\diam K_\vf+\|\Xi_\vf\|)\cdot 2\e^\ast+D.
 \end{eqnarray*}
Consequently, there is a constant $k_\vf'$ which  only depends on $\vf$ such that for all $T$ large enough, if
$\frac{\xi(g^{T^\ast}\om)}{T^\ast}\in K_\vf$, then
$$
\exp\left(|T_i^\#H(\tfrac{\xi_i^\ast}{T_i^\#})-T^\ast H(\tfrac{\xi_{T^\ast}(\om)}{T^\ast})|\right)\leq \exp\left(k_\vf'\e^\ast+D\|\nabla H(\Xi_\vf)\|\right)=:k_\vf(\e^\ast).
$$

Thus the term in the brackets in (\ref{inequality}) can be estimated by
$$
1+k_\vf(\e^\ast)^2\frac{\max\Psi_\vf}{\min\Psi_\vf}\left(\frac{N^+-N^-}{N^-}\right)\leq 1+\frac{4 {\e^\ast}C_\vf^2 k_\vf(\e^\ast)^2}{1-2\e^\ast},
$$
and so $I_T(\om,E)/I_T(\om,E_0)\leq e^{2C_\vf\e^\ast}e^{2\e}\frac{m_\vf(E)}{m_\vf(E_0)}F_\vf(\e_0,\e^\ast)$ where
$$
F_\vf(\e,\e^\ast):=e^{2C_\vf\e^\ast+2\e}\left(1+\frac{4\e^\ast C_\vf k_\vf(\e^\ast)^2}{1-2\e^\ast}\right).
$$
The lower bound is obtained in the same way, and the step is proved.

\medskip
We can now prove the lemma. Fix $\wt{\e}$. Choose $\e,\e^\ast$ so small that
$e^{-\wt{\e}}<F(\e,\e^\ast)<e^{\wt{\e}}$. Then the lemma follows with $\wt{\e}$ instead of $\e$. \end{proof}

\subsection*{Proof of Proposition \ref{directprop}} Take $\e$ and $K_\vf(\e)$ as in the previous section, and fix $\e^\ast$ as in lemma \ref{basiclemma}. Call a function an $\e^\ast$--{\em step function} if it is a finite linear combination of indicators of $\e^\ast$--basic sets. Lemma \ref{basiclemma} clearly implies (\ref{gribki}) for any pair of $\e^\ast$--step functions with non-zero integrals.

 We also have (\ref{gribki}) for all  non-negative non-identically zero continuous functions $f,g$  with compact supports which do not intersect the section:
Such functions can be sandwiched between  $\e^\ast$--step functions  with almost the same integrals, and $\e^\ast$ and the width of the basic sets can be chosen arbitrarily small.

We claim that there is an open neighborhood $V$ of the section such that  for all  non-negative, non-identically zero, continuous functions $f,g$  supported inside $V$, if $T$ is large enough and $\xi_{\ln T}(\om)/\ln T\in int(K_\vf)$, then
$$
e^{-3\e}\frac{\int f dm_\vf}{\int g dm_\vf}\,\le
\frac{\int_0^T f(h^t\om)dt}{\int_0^T g(h^t\om)dt}\leq e^{3\e}\frac{\int f dm_\vf}{\int g dm_\vf}.
$$

Every point  $\wt{\om}$ in the section has a precompact open neighborhood $V_{\wt{\om}}$ and a constant $0<s(\wt{\om})<\e^\ast<\e$ such that the closure of $g^{s(\wt{\om})}[V_{\wt{\om}}]$ does not intersect the section. Suppose $f_1,f_2$ are non-negative, non-identically zero, continuous functions with  compact support  in  $V_{\wt{\om}}$;  then $\wt{f}_i:=f_i\circ g^{-s(\wt{\om})}$ are uniformly continuous functions with compact supports which do not intersect the section. Thus they satisfy (\ref{gribki}) for all $T$ large enough.

The commutation relation between the geodesic flow and the horocycle flow implies
$
\int_0^{T}{f}_i(h^t\om)dt
=e^{s(\wt{\om})}\int_0^{Te^{-s(\wt{\om})}}\wt{f}_i(h^{\tau}g^{-s(\wt{\om})}\om)d\tau
$.
If $\frac{\xi_{\ln T}(\om)}{\ln T}\in int(K_\vf)$, then for $T$ large enough  $\frac{\xi_{\ln T}(g^{s(\wt{\om})}\om)}{\ln T}\in int(K_\vf)$. For such $T$,
$ \int_0^{T}{f}_1(h^t\om)dt/\int_0^{T}{f}_2(h^t\om)dt= \int_0^{Te^{-s(\wt{\om})}}\wt{f}_1(h^\tau g^{-s(\wt{\om})}\om)d\tau/
\int_0^{Te^{-s(\wt{\om})}}\wt{f}_2(h^\tau g^{-s(\wt{\om})}\om)d\tau=e^{\pm \e}\int \wt{f}_1/\int \wt{f}_2$. The integral ratio is equal to $\int f_1/\int f_2$, because  $m_\vf\circ g^\tau=e^{-P(u_\vf)\tau}m_\vf$.

Thus (\ref{gribki}) holds for all $f_1,f_2$ supported inside $V_{\wt{\om}}$.

Now suppose $\supp f_1\subset V_{\wt{\om}_1}$ and $\supp f_2\subset V_{\wt{\om}_2}$ where $\wt{\om}_1\neq \wt{\om}_2$. Choose non-negative non-identically zero continuous $g_i$ with compact support in $V_{\wt{\om}_i}\setminus$section. Writing
$$
\frac{\int_0^T f_1\circ h^t dt}{\int_0^T f_2\circ h^t dt}=
\frac{\int_0^T f_1\circ h^t dt}{\int_0^T g_1\circ h^t dt}\cdot\frac{\int_0^T g_1\circ h^t dt}{\int_0^T g_2\circ h^t dt}\cdot\frac{\int_0^T g_2\circ h^t dt}{\int_0^T f_2\circ h^t dt},
$$
we see that for all $T$ large enough, if $\xi_{\ln T}(\om)/\ln T\in int(K_\vf)$ then
$$
e^{-3\e}\frac{\int f_1 dm_\vf}{\int f_2 dm_\vf}\,\le
\frac{\int_0^T f_1(h^t\om)dt}{\int_0^T f_2(h^t\om)dt}\leq e^{3\e}\frac{\int f_1 dm_\vf}{\int f_2 dm_\vf}.
$$

Let $V$  be the union of $V_{\wt{\om}}$ where $\wt{\om}$ ranges over the section. Every non-negative continuous function with compact support in $V$ is the sum of finitely many non-negative continuous functions supported inside $V_{\wt{\om}}$. Thus proposition \ref{directprop} holds for all $f,g$ non-negative, non-identically zero, continuous functions supported inside $V$.

\medskip
Now set $\mathcal U$ to be the family of all non-negative non-identically zero continuous functions with compact support, whose support does not intersect the section. Define $\mathcal V$ to be the family of all non-negative non-identically zero continuous functions supported inside $V$.

We saw that proposition \ref{directprop} holds for all pairs of functions in $\mathcal U$, and for all pairs of functions  in $\mathcal V$. Since $\mathcal U\cap\mathcal V\neq \emptyset$, proposition \ref{directprop} holds for all pairs of  functions in $\mathcal U\cup\mathcal V$. Since any non-negative continuous function of compact support is the sum of a function from $\mathcal U$ and a function from $\mathcal V$, proposition \ref{directprop} holds for all non-negative continuous functions of compact support.
\hfill $\Box$

\section{A Question}
The obvious question is what happens for other hyperbolic surfaces of infinite genus. Theorem \ref{mainthm} does not make sense for such surfaces, because the notion of asymptotic cycle is specific to $\Z^d$--covers.

It is  desirable to find another criterion which does make sense in general.

The following observation is perhaps a step in this direction. Let $\Delta$ denote the Laplace--Beltrami operator on the hyperbolic surface $M$. A positive eigenfunction $F:M\to\R$ is called {\em minimal}, if it defines an extremal ray in the cone of positive eigenfunctions with the same eigenvalue. The minimal positive eigenfunctions are known for $\Z^d$--covers \cite{CG}, \cite{LP}: they form a list $\{c F_\vf: c>0, \vf:\Z^d\to\R\textrm{ a homomorphism}\}$, where $F_\vf$ satisfies $F_\vf\circ D_{\xi}=e^{\vf(\xi)}F_\vf$ for all deck transformations $D_\xi$, $\xi\in\Z^d$. The similarity with the collection of Babillot--Ledrappier measures is not a coincidence, see \cite{LS2}.

Extend $F_\vf:M\to\R$ to $F_\vf:T^1 M\to\R$ by setting $F_\vf(\om)=F_\vf(\textrm{base point of $\om$})$. It is easy to see using the precompactness of $\wt{M}_0$ and the continuity of $F_\vf$ that for all homomorphisms $\vf$,
$$
\vf(\Xi(\om))=\lim\limits_{T\to\infty}\frac{1}{T}\ln F_\vf(g^T\om).
$$
Thus $\Xi(\om)$ is completely determined by the logarithmic growth of the minimal positive eigenfunctions of $\Delta$ along the forward geodesic ray of $\om$.

In particular, we get the following corollary of theorem \ref{mainthm}: Suppose $M$ is a $\Z^d$--cover of a compact hyperbolic surface, then $\om$ is generic for the horocycle flow with respect to the volume measure iff
$$
\lim\limits_{T\to\infty}\frac{1}{T}\ln F(g^T\om)=0
$$
for every positive minimal eigenfunction $F$ of the Laplace--Beltrami operator of $M$.

\subsection*{Question:} {\em  Does the above extend to other, perhaps all, hyperbolic surfaces with the Liouville property? }

\medskip
\noindent
We remind the reader that a hyperbolic surface is called {\em Liouville} if all its bounded harmonic functions are constant. We need to assume the Liouville property, because of Kaimanovich's theorem \cite{K}, which says that the volume measure is ergodic for the horocycle flow on a hyperbolic surface, iff this surface is Liouville.

\end{document}